
\documentclass[sigconf]{acmart}

\usepackage{amsmath, amsthm, color}

\pdfoutput=1
\usepackage{graphicx}
\usepackage{caption}
\usepackage{mathtools}
\usepackage{enumitem}
\usepackage{xfrac}
\usepackage{verbatim}
\usepackage{tikz}
\usetikzlibrary{matrix}
\usetikzlibrary{arrows}
\usepackage{algorithm}
\usepackage[noend]{algpseudocode}
\usepackage{caption}
\usepackage{subcaption}
\tolerance 10000
\headsep 0.15in
\evensidemargin 0in
\oddsidemargin \evensidemargin
\textwidth 6.5in
\topmargin .25in
\textheight 8.8in

\synctex=1
\usepackage{makecell}
\usepackage{array}

\newtheorem{theorem}{Theorem}
\newtheorem{proposition}[theorem]{Proposition}
\newtheorem{lemma}[theorem]{Lemma}
\newtheorem{corollary}[theorem]{Corollary}

\theoremstyle{definition}

\newtheorem{remark}[theorem]{Remark}
\newtheorem{conjecture}[theorem]{Conjecture}

\newtheorem{example}[theorem]{Example}

\newcommand{\PP}{\mathbb{P}}

\title{\bf Moment Varieties for Mixtures of Products}
\date{}

\copyrightyear{2023}
\acmYear{2023}
\setcopyright{rightsretained}
\acmConference[ISSAC 2023]{International Symposium on Symbolic and Algebraic Computation 2023}{July 24--27, 2023}{Tromsø, Norway}
\acmBooktitle{International Symposium on Symbolic and Algebraic Computation 2023 (ISSAC 2023), July 24--27, 2023, Tromsø, Norway}\acmDOI{10.1145/3597066.3597084}
\acmISBN{979-8-4007-0039-2/23/07}

\begin{document}

\author{Yulia Alexandr}
\email{yulia@math.berkeley.edu}
\affiliation{%
  \institution{UC Berkeley}
  \city{Berkeley}
  \country{California, USA}}

\author{Joe Kileel}
\email{jkileel@math.utexas.edu}
\affiliation{%
  \institution{UT Austin}
  \city{Austin}
  \country{Texas, USA}}

\author{Bernd Sturmfels}
\email{bernd@mis.mpg.de}
\affiliation{%
  \institution{MPI-MiS Leipzig  and UC Berkeley}
  \city{Leipzig}
  \country{Germany}}

\begin{abstract}
The setting of this article is nonparametric algebraic statistics. We study moment varieties of
 conditionally independent mixture distributions on $\mathbb{R}^n$. These are the secant varieties of
 toric varieties that express independence in terms of univariate moments.  Our results revolve around the dimensions and defining polynomials of these varieties.
\end{abstract}


\begin{CCSXML}
<ccs2012>
   <concept>
       <concept_id>10002950.10003648.10003702</concept_id>
       <concept_desc>Mathematics of computing~Nonparametric statistics</concept_desc>
       <concept_significance>500</concept_significance>
       </concept>
   <concept>
       <concept_id>10002950.10003648.10003649</concept_id>
       <concept_desc>Mathematics of computing~Probabilistic representations</concept_desc>
       <concept_significance>500</concept_significance>
       </concept>
 </ccs2012>
\end{CCSXML}

\ccsdesc[500]{Mathematics of computing~Nonparametric statistics}
\ccsdesc[500]{Mathematics of computing~Probabilistic representations}
\keywords{nonparametric statistics, conditional independence, method of moments, varieties, toric, secant, dimension, finiteness
}



\maketitle
\section{Introduction}

Consider $n$ independent random variables $X_1,X_2,\ldots,X_n$ on the line $\mathbb{R}$.
We make no assumptions about the $X_k$ other than that their moments
 $\mu_{ki} = \mathbb{E}(X_k^i)$ exist. Then, by
\cite[Theorem 30.1]{Bil}, the random variable $X_k$ is uniquely characterized by its
sequence of
moments $\mu_{k1}, \mu_{k2}, \mu_{k3}, \ldots$.
 These moments 
   satisfy the Hamburger moment condition, which states that
 \begin{equation}
 \label{eq:hamburger} \begin{small}
\begin{bmatrix}
\mu_{k0} & \mu_{k1} & \mu_{k2} & \ldots \\
\mu_{k1} & \mu_{k2} & \mu_{k3} & \ldots \\
\mu_{k2} & \mu_{k3} & \mu_{k4} & \ldots \\
\vdots & \vdots & \vdots & \ddots 
\end{bmatrix} \end{small} \text{is positive semi-definite for all $k$.}
\end{equation}
For us, the $\mu_{ki}$ are unknowns. The only equations we require are
$\mu_{k 0} = 1$ for $k=1,2\ldots,n$.

 We write $m_{i_1 i_2 \cdots i_n}$ for the moments of 
 the random vector $X = (X_1, X_2,\ldots,X_n)$.
 The moments are the expected values of  the monomials
 $X_1^{i_1} X_2^{i_2} \cdots X_n^{i_n}$.
  By independence, we~have
     \begin{equation}
\label{eq:param}
\begin{split}
    m_{i_1 i_2 \cdots i_n}  &= 
 \mathbb{E}(X_1^{i_1} X_2^{i_2} \cdots X_n^{i_n})  \\
 &= 
 \mathbb{E}(X_1^{i_1} ) \mathbb{E}(X_2^{i_2})  \cdots 
 \mathbb{E}(X_n^{i_n}) =
   \mu_{1 i_1} \mu_{2 i_2} \cdots \mu_{n i_n} .
\end{split}
\end{equation} 
We examine this squarefree monomial parametrization for 
the moments with $i_1 + i_2 + \cdots + i_n = d$.
Its image is a toric variety $\mathcal{M}_{n,d}$ 
in the  projective space $\mathbb{P}^{\binom{n+d-1}{d}-1}$ 
of symmetric tensors.

\begin{example}[$n=d=3$] \label{ex:dreidrei}
The moment variety $\mathcal{M}_{3,3}$
 is defined by the monomial parametrization
 $m_{i_1 i_2 i_3} = \mu_{1 i_1} \mu_{2 i_2} \mu_{3 i_3}$ for $i_1 + i_2 + i_3 = 3$.
 We find that $\mathcal{M}_{3,3}$ is a cubic hypersurface
in the space $\mathbb{P}^9$ of symmetric $3 \times 3 \times 3$ tensors.
It is defined by
$\,m_{012} m_{120} m_{201} =  m_{021} m_{210} m_{102} $.
\end{example} 

In this article we study
mixtures of $r$ independent distributions.
 The associated moment variety  $\sigma_r(\mathcal{M}_{n,d}) $  is the $r$th secant variety
of the toric variety $\mathcal{M}_{n,d}$. It is parametrized~by
 \begin{equation} 
 \label{eq:paramm}
 \begin{split}
  m_{i_1 i_2 \cdots i_n} \,\, &= \,\,\sum_{j=1}^r \mu_{1 i_1}^{(j)} \,\mu_{2 i_2}^{(j)} \,\cdots \,\mu_{n i_n}^{(j)} \\
  & \!\!\!\! \hbox{where $i_1,i_2,\ldots,i_n \geq 0$ and $ i_1+i_2+\cdots+i_n=d$.}
\end{split}
  \end{equation}
   These are the moment varieties 
   in our title. Mixture weights can be ommitted in
   (\ref{eq:paramm}) since we work in projective geometry.

   We study these and their images under certain
   coordinate projections
$\mathbb{P}^{\binom{n+d-1}{d}-1} \dashrightarrow \mathbb{P}^{|N_\lambda|-1}$.
  Here $\lambda$  is any partition of $d$, and $N_\lambda$ is the set 
  of moments
 $m_{i_1 i_2 \ldots i_n}$ where $\{i_1,i_2,\ldots,i_n\}\backslash \{0\}$
 equals $\lambda$ as a multiset.
The images in $\mathbb{P}^{|N_\lambda|-1}$ of
the restricted parametrizations  (\ref{eq:param}) and (\ref{eq:paramm}) 
are denoted by $\mathcal{M}_{n,\lambda}$ and $\sigma_r(\mathcal{M}_{n,\lambda})$.
The restricted varieties make sense for statistics because
they refer to subclasses of moments that are natural
when infering parameters.
We note that
$\mathcal{M}_{n,\lambda}$
is also a toric variety and $\sigma_r(\mathcal{M}_{n,\lambda})$
is its $r$th secant variety.\

\begin{example}[$n=5,d=3$] \label{ex:zwei}
There are three partitions~$\lambda = (111)$,
$\lambda = (21)$ and $\lambda = (3)$. 
The number of moments $m_{i_1 i_2 i_3 i_4 i_5}$ equals
$\binom{5+3-1}{3} = 35$. This is the sum of
$N_{(111)} = 10$,  $N_{(21)} = 20$ and $N_{(3)} = 5$.
The following three toric varieties have dimensions $4,8,4$ respectively:
$$ \begin{matrix} \mathcal{M}_{5,(111)} \subset \PP^9  : m_{11100} = \mu_{11} \mu_{21} \mu_{31} ,\,  \\
m_{11010} = \mu_{11} \mu_{21} \mu_{41}, \,
\ldots,\, m_{00111} = \mu_{31} \mu_{41} \mu_{51} . \\
\,  \mathcal{M}_{5,(21)} \, \subset \PP^{19}  :  m_{21000} = \mu_{12} \mu_{21} ,\, 
m_{12000} = \mu_{11} \mu_{22},\, \\
m_{20100} = \mu_{12} \mu_{31} ,\, \ldots, \,
m_{00012} = \mu_{41} \mu_{52}. \\
\mathcal{M}_{5,(3)}  \,= \PP^4  :  m_{30000} = \mu_{13}, \,m_{03000} = \mu_{23} ,\,\ldots,\,
m_{00003} = \mu_{53}.\end{matrix} $$
Combining these parametrizations yields the  $14$-dimensional moment variety
$\mathcal{M}_{5,3} \subset \mathbb{P}^{34}$.
We will discuss the ideals of these toric varieties
and their secant varieties later on.
\end{example}

Our study is a sequel to the  work of Zhang and Kileel in \cite{ZK}.
That article takes an applied data science perspective and it offers
numerical algorithms for learning the parameters $\mu^{(j)}_{ki}$
from empirical moments $m_{i_1 i_2 \cdots i_n}$.
The primary focus in \cite{ZK} lies on numerical 
tensor methods for this recovery task. 
A key ingredient for their approach is identifiability,
which means that the dimension of the moment
variety $\sigma_r( \mathcal{M}_{n,\bullet})$
matches the number of free parameters.

The present paper lies at the interface of computer algebra
and nonparametric statistics. Our set-up is nonparametric
in the sense that no model assumptions are made
on the  constituent random variables on $\mathbb{R}$.
Conditional independence arises by passing via (\ref{eq:paramm}) to multivariate
 distributions on $\mathbb{R}^n$. This imposes semialgebraic constraints on
 the moments $m_{i_1 i_2 \cdots i_n}$.
 We disregard the inequalities in
 (\ref{eq:hamburger}) and focus on polynomial equations. This leads us to projective varieties, as is customary in algebraic statistics \cite{Sul}. Their defining equations provide test statistics for mixtures of products \cite[Section~3]{DSS}.

Our presentation is organized as follows. In Section \ref{sec2}
we demonstrate the wide range of interesting models
that are featured here.  Our scope includes themes from the early days of algebraic statistics:
 factor analysis~\cite{DSS} and
permutation data \cite[Section~6]{DS}.
 For the special partition $\lambda = (1,1,\ldots,1) $ we obtain the
 toric ideals associated with hypersimplices. 
 
 In Section \ref{sec3} we show that our varieties
exhibit finiteness up to symmetry, in the sense of
Draisma and collaborators \cite{BD, Draisma, DEFM, DEKL}.
Namely, if $r,d,\lambda$ are fixed and $n$  is unbounded then
finitely many $S_n$-orbits of polynomials suffice to cut out the varieties 
$\sigma_r( \mathcal{M}_{n,\bullet})$. Therefore,
computer algebra can be useful for 
high-dimensional data analysis in the setting of~\cite{ZK}.

Section \ref{sec4} is a detailed study of the
toric varieties  $\mathcal{M}_{n,\bullet}$.
We determine their dimensions, and we
investigate their polytopes and toric ideals. 
The ideal for $\mathcal{M}_{n,\lambda}$ is generated by quadrics and cubics, but the ideal for $\mathcal{M}_{n,d}$ is more complicated.
In Section \ref{sec5} we turn to identifiability of the secant varieties
$\sigma_r( \mathcal{M}_{n,\bullet})$  for $ r \geq 2$.
We present what we know about their dimensions.
Our main results are Theorems \ref{thm:integerprogram} and
\ref{thm:tropical}. These rest on 
integer programming and tropical geometry.

The parametrization (\ref{eq:paramm}) represents
a challenging implicitization problem.
In Section \ref{sec6} we report on some
computational results, featuring both symbolic
and numerical methods.  
For most examples in this paper we
used symbolic computations.

\section{Familiar Varieties}\label{sec2}
The study of highly structured projective varieties is
a main theme in algebraic statistics. 
This includes varieties of discrete probability distributions as well as
moment varieties of continuous distributions; 
see e.g.~\cite{AFS, KSS}. Note that
Veronese varieties fall into both categories.
In this section we match 
some of our moment varieties $\sigma_r( \mathcal{M}_{n,\bullet})$
with the existing literature.

We begin with $d=2$, so each $(i_1,i_2,\ldots,i_n)$ has
at most two non-zero entries. We change notation so that
the second moments are the entries of the
$n \times n$ covariance matrix $(m_{ij})$.

\begin{example}[$n=5,d=2$] The parametrization 
of the toric variety $\mathcal{M}_{5,2}$ 
given in (\ref{eq:param}) 
is written in matrix form~as 
\begin{align*}
&\qquad\begin{small}
\begin{bmatrix}
m_{11} & m_{12} & m_{13} & m_{14} & m_{15} \\
m_{12} & m_{22} & m_{23} & m_{24} & m_{25} \\
m_{13} & m_{23} & m_{33} & m_{34} & m_{35} \\
m_{14} & m_{24} & m_{34} & m_{44} & m_{45} \\
m_{15} & m_{25} & m_{35} & m_{45} & m_{55} 
\end{bmatrix} 
\end{small}
= \\
&
\begin{small}
\begin{bmatrix}
\mu_{12}  & \mu_{11} \mu_{21} &  \mu_{11} \mu_{31} & \mu_{11} \mu_{41} & \mu_{11} \mu_{51} \\
\mu_{11} \mu_{21}  & \mu_{22} &  \mu_{21} \mu_{31} & \mu_{21} \mu_{41} & \mu_{21} \mu_{51} \\
\mu_{11} \mu_{31}  & \mu_{21} \mu_{31} &  \mu_{32} & \mu_{31} \mu_{41} & \mu_{31} \mu_{51} \\
\mu_{11} \mu_{41}  & \mu_{21} \mu_{41} &  \mu_{31} \mu_{41} & \mu_{42}  & \mu_{41} \mu_{51} \\
\mu_{11} \mu_{51}  & \mu_{21} \mu_{51} &  \mu_{31} \mu_{51} & \mu_{41} \mu_{51} &  \mu_{52} 
\end{bmatrix}. 
\end{small}
\end{align*}

This shows that $\mathcal{M}_{5,2}$ is the join in $\mathbb{P}^{14}$ of the projective space 
$\mathcal{M}_{5,(2)} = \mathbb{P}^4 $  with coordinates $m_{ii} = \mu_{i2}$ and
the $4$-dimensional toric variety $\mathcal{M}_{5,(11)} \subset \mathbb{P}^9$
given by the off-diagonal entries.
The toric fourfold $\mathcal{M}_{5,(11)}$ has degree $11$. This is visualized in 
\cite[Figure 9-2]{GBCP}. Its ideal is generated by ten binomial quadrics, like $m_{12} m_{34} - m_{13} m_{24}$, as shown in 
\cite[Equation (9.2)]{GBCP}.

Passing from $r=1$ to $r=2$, we note that the secant variety  has the join decomposition
\begin{align*}
\sigma_2( \mathcal{M}_{5,2}) \, = \,
\sigma_2 \bigl( \,\mathbb{P}^4 \star \mathcal{M}_{5,(11)} \bigr) \,
&= \,
\mathbb{P}^4 \star \sigma_2 ( \mathcal{M}_{5,(11)} )  \,\, \\
&\subset \,\,
\mathbb{P}^4 \star \mathbb{P}^9 \, = \, \mathbb{P}^{14}. 
\end{align*}
The star denotes the join of projective varieties, which arises
from the Minkowski sum of the corresponding affine cones.

The prime ideal of the model $\sigma_2( \mathcal{M}_{5,2})$
is found by eliminating the diagonal entries $m_{ii}$
from the ideal of $3 \times 3$-minors of 
the symmetric $5  \times 5$ matrix $( m_{ij} )$.
The elimination ideal is principal, and its generator
is the polynomial
\begin{equation}
\label{eq:pentad} \!\!\!\!\! \begin{small} \begin{matrix} 
\,\,\,\, m_{12} m_{13} m_{24} m_{35} m_{45}
{-}m_{12} m_{13} m_{25} m_{34} m_{45}
{-}m_{12} m_{14} m_{23} m_{35} m_{45} \\
+ m_{12} m_{14} m_{25} m_{34} m_{35} 
{+}m_{12} m_{15} m_{23} m_{34} m_{45}
{-}m_{12} m_{15} m_{24} m_{34} m_{35} \\
+m_{13} m_{14} m_{23} m_{25} m_{45}
{-}m_{13} m_{14} m_{24} m_{25} m_{35} 
{-}m_{13} m_{15} m_{23} m_{24} m_{45}\\
+m_{13} m_{15} m_{24} m_{25} m_{34}
{+}m_{14} m_{15} m_{23} m_{24} m_{35}
{-}m_{14} m_{15} m_{23} m_{25} m_{34}.
\end{matrix} \end{small}
\end{equation}
This quintic is known as the {\em pentad}, and it
plays an important role in {\em factor analysis} \cite{DSS}.
In conclusion,  the $8$-dimensional moment variety 
$\sigma_2(\mathcal{M}_{5,(11)})$ is already familiar to statisticians.
\end{example}

We now generalize to the toric variety defined by
$\lambda = (1^d) = (1,1,\ldots,1)$ for any $n > d$.

\begin{remark} \label{rmk:eulerian}
The moment variety $\mathcal{M}_{n,(1^d)}$ is the toric variety
 associated with the hypersimplex
$$ \Delta(n,d) = {\rm conv} \bigl\{
e_{l_1} + e_{l_2} + \cdots + e_{l_d} \,:\,
1 \leq l_1 < l_2 < \cdots < l_d \leq n \bigr\}. $$
The variety  $\mathcal{M}_{n,(1^d)}$ has dimension
 $n-1$, it lives in $\mathbb{P}^{\binom{n}{d}-1}$
and its degree is the Eulerian number $A(n,d)$.
This is the number of permutations of $[n] = \{1,2,\ldots,n\}$
which have exactly $d$ descents.
Indeed, the degree of any projective toric variety equals
the normalized volume of the associated polytope \cite[Theorem 4.16]{GBCP},
and the formula ${\rm Vol}(\Delta(n,d)) = A(n,d)$
 is well-known in algebraic combinatorics. In \cite[Theorem 2.2]{Liu} it
 is attributed to Laplace.
\end{remark}

In the case of the second hypersimplex, one
can compute the prime ideal of
$\sigma_r(\mathcal{M}_{n,(11)})$
by eliminating the diagonal entries $m_{ii}$
from the ideal of $(r{+}1) \times (r{+}1)$ minors
of the covariance matrix $(m_{ij})$. 
This elimination problem is tough. For some instances see
 \cite[Table~1]{DSS}.

\begin{example} The moment variety 
$\sigma_5(\mathcal{M}_{9,(11)})$
is a hypersurface of degree $54$ in  $\mathbb{P}^{35}$.
The representation of its equation by means of resultants is explained in
\cite[Example~24]{DSS}.
\end{example}

We now turn to a scenario that  played a 
pivotal role in launching algebraic statistics in
the 1990s, namely the
spectral analysis of permutation data,
as described in \cite[Section 6]{DS}.
This is based on the toric ideal associated with the
Birkhoff polytope, whose vertices are the
$n!$ permutation matrices of size $n \times n$.
For an algebraic discussion see \cite[Section 14.B]{GBCP}.

\begin{proposition}\label{prop:birkhoff}
The moment variety $\mathcal{M}_{n,\lambda}$
for the partition $\lambda = (n-1,n-2,\ldots,2,1)$
is the toric variety of the Birkhoff polytope, which lives
in $\mathbb{P}^{n!-1}$ and has dimension $(n-1)^2$.
\end{proposition}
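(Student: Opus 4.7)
The plan is to set up a direct bijection between the index set $N_\lambda$ and the symmetric group $S_n$, and then verify that the moment parametrization (\ref{eq:param}) coincides exponent-by-exponent with the standard toric parametrization of the Birkhoff polytope.

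First I would unpack what $N_\lambda$ looks like for $\lambda = (n-1, n-2, \ldots, 2, 1)$. By definition, the multiset of non-zero entries of $(i_1, \ldots, i_n) \in N_\lambda$ must equal $\{1, 2, \ldots, n-1\}$, so exactly one entry is $0$ and the remaining $n-1$ entries form a permutation of $\{1, \ldots, n-1\}$. Hence $(i_1, \ldots, i_n)$ is itself a permutation of $\{0, 1, \ldots, n-1\}$, and this gives a natural bijection $N_\lambda \leftrightarrow S_n$. In particular $|N_\lambda| = n!$, so $\mathcal{M}_{n,\lambda} \subset \mathbb{P}^{n!-1}$.

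Next I would arrange the parameters $\mu_{k,j}$ into an $n \times n$ grid with rows $k \in [n]$ and columns $j \in \{0, 1, \ldots, n-1\}$, and to each $\pi \in S_n$, viewed as a bijection $[n] \to \{0, 1, \ldots, n-1\}$, attach the $n \times n$ permutation matrix $P_\pi$ with $(P_\pi)_{k, j} = 1$ iff $\pi(k) = j$. The parametrization (\ref{eq:param}) then reads
\begin{equation*}
m_\pi \,=\, \prod_{k=1}^n \mu_{k, \pi(k)} \,=\, \prod_{k,j} \mu_{k,j}^{(P_\pi)_{k,j}},
\end{equation*}
which is exactly the toric monomial attached to the vertex $P_\pi$ of the Birkhoff polytope $B_n = {\rm conv}\{P_\pi : \pi \in S_n\}$. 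It follows that $\mathcal{M}_{n,\lambda}$ is the projective toric variety of $B_n$. The normalization $\mu_{k,0} = 1$ does not alter this variety: rescaling row $k$ of the matrix $(\mu_{k,j})$ by any common factor multiplies every $m_\pi$ by the same scalar (since each $P_\pi$ has precisely one $1$ in row $k$), hence acts trivially in $\mathbb{P}^{n!-1}$, so one may always rescale to achieve $\mu_{k,0} = 1$.

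Finally I would settle the dimension via the standard principle that a projective toric variety has the dimension of its lattice polytope. The affine hull of $B_n$ in $\mathbb{R}^{n \times n}$ is the space of matrices with all $n$ row sums and all $n$ column sums equal to $1$; these $2n$ linear conditions satisfy exactly one dependence, namely that the total row sum equals the total column sum, so the affine hull has codimension $2n-1$ and $\dim B_n = n^2 - (2n-1) = (n-1)^2$. The only genuinely substantive step is the identification $N_\lambda \leftrightarrow S_n$ together with the observation that the normalization $\mu_{k,0}=1$ is harmless; everything afterward is a direct appeal to classical facts about the Birkhoff polytope, so I do not anticipate a real obstacle.
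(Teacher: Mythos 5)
Your proposal is correct and follows essentially the same route as the paper: identify $N_\lambda$ with the permutations of $\{0,1,\ldots,n-1\}$, match the monomial parametrization to the Birkhoff toric parametrization, and recognize that the $\mu_{k,0}=1$ normalization is harmless. The only cosmetic difference is the direction of that last step — the paper drops the $j=0$ row of each permutation matrix and notes this is an affine isomorphism of the polytope, whereas you reinstate $\mu_{k,0}$ as a free parameter and observe that rescaling a row of $(\mu_{k,j})$ acts as a global scalar on the $m_\pi$, hence trivially in $\mathbb{P}^{n!-1}$ (this is precisely the device used later in the paper's Lemma~\ref{lem:partition-identification}).
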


\begin{proof}
The moment coordinates $m_{i_1 i_2 \ldots i_n}$
for $\mathcal{M}_{n,\lambda}$ are
indexed by the $n!$ permutations of
$\{0,1,2,\ldots,n-1\}$.
The monomials on the right hand side of (\ref{eq:param})
have degree $n-1$ in  $n(n-1)$ distinct parameters $\mu_{ki}$.
The exponent vectors of these monomials can be identified with the
permutation matrices from which the first row has been removed.
This removal is an affine isomorphism, so it preserves the
Birkhoff polytope, which has dimension $(n-1)^2$.
\end{proof}

\begin{example}[$n=4$] \label{ex:birkhoff4}
The toric variety $\mathcal{M}_{4,(321)} \subset \mathbb{P}^{23}$
has dimension $9$ and degree $352$.
Its ideal is minimally generated by
$18$ quadrics like $m_{0123} m_{1032} -  m_{0132} m_{1023}$
and $160$ cubics~like $\,m_{0123} m_{1203} m_{2013} -  m_{0213} m_{2103} m_{1023} $.
The latter are induced from the $n=3$ case in Example~\ref{ex:dreidrei}.
 See \cite[Example 14.7]{GBCP} for a Gr\"obner basis
 and \cite[Section 6.1]{DS} for a statistical perspective.
\end{example}

\begin{example}[$n=5$] \label{ex:birkhoff5}
This appears in the last paragraph of \cite[Section 6.1]{DS}.
The toric variety $\mathcal{M}_{5,(4321)} \subset \mathbb{P}^{119}$
has dimension $16$ and degree $4718075$.
Its ideal is minimally generated by $1050$ quadrics and
$28840$ cubics. 
\end{example}

Yamaguchi, Ogawa and Takemura \cite{YOT} showed that the
toric ideal for the variety in Proposition \ref{prop:birkhoff} is always
generated in degree two and three.
Theorem \ref{thm:generated-cubics} generalizes this~result.

The degrees reported in Examples \ref{ex:birkhoff4}
and \ref{ex:birkhoff5} are the
volume of the Birkhoff polytope.
This volume is known up to $n=10$ \cite{BP, BP-arxiv}.

\section{Finiteness}
\label{sec3}

We consider
the projective variety $\sigma_r(\mathcal{M}_{n,d}) \subset \mathbb{P}^{\binom{n+d-1}{d}-1}$ 
defined by   (\ref{eq:paramm}).
 This   parametrization can be understood as follows without any reference to probability or statistics.
 Namely, $m_{i_1 i_2 \cdots i_n}$ is the coefficient
 of the monomial $x_1^{i_1} x_2^{i_2} \cdots x_n^{i_n}$ in the expansion of
 the polynomial
 \begin{equation}
 \label{eq:bigM}
  F(x_1,x_2, \ldots,x_n) = \sum_{j=1}^r \,f^{(j)}_1(x_1) \, f^{(j)}_2(x_2)  \,\,\cdots\,\, f^{(j)}_n(x_n),
    \end{equation}
 where $f^{(j)}_k(x) = 1 + \mu^{(j)}_{k1} x +  \mu^{(j)}_{k2} x^2 + \cdots + 
  \mu^{(j)}_{kd} x^d$ are $rn$ unknown univariate polynomials.
   For any partition $\lambda$ of $d$, let $N_\lambda$ be the subset  of coefficients
 $m_{i_1 i_2 \ldots i_n}$ where $\{i_1,i_2,\ldots,i_n\} \backslash \{0\}$
 equals $\lambda$ as a multiset.
  The variety $\sigma_r(\mathcal{M}_{n,\lambda})$  is the closure of
the image of $\sigma_r(\mathcal{M}_{n,d})$ under the map
$\mathbb{P}^{\binom{n+d-1}{d}-1} \dashrightarrow \mathbb{P}^{|N_\lambda|-1}$.
The polynomial in (\ref{eq:bigM}) is the
truncated moment generating function.
Taking $r=1$  we obtain the
toric varieties $\mathcal{M}_{n,\bullet}$.

The equations for these varieties satisfy {\em finiteness up to symmetry} when
$r, d,\lambda$ are fixed and $n$ grows.
Here symmetry refers to the
action of the symmetric group $S_n$ on our varieties,
their parametrizations (\ref{eq:paramm}), and their prime ideals.
These ideals satisfy natural inclusions
$$ \qquad I(\sigma_r(\mathcal{M}_{n, \bullet}))\, \subset \, I(\sigma_r(\mathcal{M}_{n+1,\bullet})) ,\qquad {\rm where} \quad
\bullet \in \{d,\lambda\},$$
by appending a zero to the indices of every coordinate.
In symbols, $\,m_{i_1 i_2 \cdots i_n} \,\mapsto \, m_{i_1 i_2 \cdots i_n 0 }$.
If we iterate these inclusions and let the big symmetric group act, then we obtain inclusions
\begin{equation}
\label{eq:inclusions} \langle S_n I(\sigma_r(\mathcal{M}_{n_0,\bullet})) \rangle \,\subseteq \,
I(\sigma_r(\mathcal{M}_{n, \bullet})) 
\quad \,{\rm for} \,\, n > n_0.  \quad
\end{equation}
{\em Ideal-theoretic finiteness} means that there exists
  $n_0$ such that  equality holds for all $n > n_0$.
The weaker notion of {\em set-theoretic finiteness} means that  equality holds in (\ref{eq:inclusions}) 
after the left ideal is enlarged to its radical. The smallest possible $n_0$, if it exists, is a
 function of $r,d,\lambda$.

\begin{example}[Pentads and $3 \times 3$ minors] \label{ex:pentadlimit}
If $r=2$ and $\lambda = (11)$ then
  ideal-theoretic
finiteness holds with $n_0 = 6$. This was proved by Brouwer and Draisma in \cite[Theorem~1.7]{BD}
in response to \cite[Conjecture 26]{DSS}.
The prime ideal of $\sigma_2( \mathcal{M}_{n,(11)})$
is generated 
 by the $\binom{n}{5}$ pentads and
the $5 \binom{n}{6} $ off-diagonal $3 \times 3$ minors of a symmetric $n \times n$ matrix.
See \cite[Section~3]{BD} for an equivariant Gr\"obner basis.
If $r=1$ then
 ideal-theoretic finiteness holds with $n_0 = 4$, by
 the Gr\"obner basis in
 \cite[Theorem 9.1]{GBCP} for
 the toric ideal of the second~hypersimplex.
\end{example}

In recent years, there has been considerable progress on
commutative algebra in
infinite polynomial rings with an action of the infinite symmetric group,
or of rings over the category FI of finite sets with injections. 
The following result reflects the state of the art on that topic.

\begin{theorem}
Given any partition $\lambda \vdash d$ and integer $r \geq 1$,
set-theoretic finiteness holds for the varieties
 $\sigma_r(\mathcal{M}_{n,d})$
and $\sigma_r(\mathcal{M}_{n,\lambda})$.
Ideal-theoretic finiteness holds in the toric case $r=1$.
\end{theorem}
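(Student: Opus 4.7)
The plan is to package the parametrization (\ref{eq:paramm}) as an $S_n$-equivariant polynomial map and deploy the equivariant Noetherianity theorems of \cite{BD, Draisma, DEFM, DEKL}. For each $n$, let $\varphi_{n,r} \colon (\mathbb{A}^d)^{nr} \to \mathbb{A}^{\binom{n+d-1}{d}}$ be the polynomial map whose coordinates are given by (\ref{eq:paramm}); the symmetric group $S_n$ acts on the source by permuting the index $k \in [n]$ and on the target by permuting $(i_1, \ldots, i_n)$. As $n$ varies, both spaces become FI-schemes, the maps $\varphi_{n,r}$ assemble into a morphism of FI-schemes, and the ideal inclusions in (\ref{eq:inclusions}) correspond to appending zeros to indices. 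The projection onto the coordinates indexed by $N_\lambda$ is itself $S_n$-equivariant, so $\sigma_r(\mathcal{M}_{n,\lambda})$ carries the same FI-theoretic structure as $\sigma_r(\mathcal{M}_{n,d})$.

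For the set-theoretic claim, I would invoke the topological Noetherianity theorem for polynomial functors of \cite{Draisma}, together with its FI-equivariant refinements in \cite{DEFM, DEKL}. The ambient space $\mathrm{Sym}^d V_n$ (with $V_n$ the permutation representation of $S_n$) and its projection onto the $N_\lambda$-coordinates are polynomial FI-functors of bounded degree, and $\varphi_{n,r}$ is a morphism of such functors. Consequently, every ascending chain of $S_\infty$-stable Zariski closed subsets stabilizes, which applied to the chain $\sqrt{I(\sigma_r(\mathcal{M}_{n,\bullet}))}$ produces an $n_0$ after which every component of $\sigma_r(\mathcal{M}_{n,\bullet})$ is cut out set-theoretically by $S_n$-orbits of equations already present at level $n_0$.

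For ideal-theoretic finiteness when $r=1$, the argument sharpens. The ideal $I(\mathcal{M}_{n,\bullet})$ is a prime binomial ideal determined by the integer lattice of relations among the monomials $\prod_k \mu_{k, i_k}$. The variables $m_{i_1\cdots i_n}$ form at most $p(d)$ orbits under $S_n$ (one per partition of $d$, and a single orbit in the $\mathcal{M}_{n,\lambda}$ case), so the ambient polynomial ring is a free commutative algebra on an FI-module of bounded width. Then $S_\infty$-Noetherianity of such rings implies that the FI-ideal $\bigcup_n I(\mathcal{M}_{n,\bullet})$ is finitely generated up to $S_\infty$-symmetry by binomials, which yields an $n_0$ for which equality holds in (\ref{eq:inclusions}).

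The hard part will be the $r \geq 2$ step: because $\varphi_{n,r}$ is defined using the distinguished monomial basis of $\mathrm{Sym}^d V_n$, it is only $S_n$-equivariant and \emph{not} $\mathrm{GL}_n$-equivariant, so Draisma's original topological Noetherianity for $\mathrm{GL}_\infty$-polynomial functors does not apply directly. One must instead verify the hypotheses of the FI-equivariant refinement in \cite{DEFM, DEKL}, checking that source and target genuinely form polynomial FI-functors of bounded degree. The restriction of the stronger (ideal-theoretic) assertion to $r=1$ also has content: secant ideals for $r \geq 2$ are not governed by lattices the way toric ideals are, so FI-Noetherianity of the ambient ring is not by itself enough to upgrade set-theoretic to ideal-theoretic finiteness for mixtures.
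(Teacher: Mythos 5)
Your overall blueprint --- package the parametrization as an $S_n$-equivariant morphism, pass to the FI-limit, and invoke equivariant Noetherianity --- is the same skeleton the paper uses, citing \cite{DEFM} (specifically Corollary 1.1.2 there) for the set-theoretic statement at general $r$ and \cite{DEKL} (Theorem 1.1) for the ideal-theoretic statement at $r=1$. Your caveat that the map is only $S_n$-equivariant, not $\mathrm{GL}_n$-equivariant, is a genuinely useful observation that the paper leaves implicit.

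However, the $r=1$ step contains a real gap. You argue that the ambient polynomial ring (``a free commutative algebra on an FI-module of bounded width'') is $S_\infty$-Noetherian and conclude that the toric FI-ideal is therefore finitely generated up to symmetry. This premise is false once the width exceeds one: already the polynomial ring $k[x_{ij} : i,j \in \mathbb{N}]$ with the diagonal $S_\infty$-action is \emph{not} ideal-theoretically Noetherian, and the same failure occurs for the rings $k[m_{i_1\cdots i_n}]$ that arise here once $d \geq 2$. So ``FI-Noetherianity of the ambient ring'' cannot be the engine. The actual content of \cite[Theorem~1.1]{DEKL} is precisely that \emph{toric} FI-ideals (i.e.\ kernels of $S_\infty$-equivariant monomial maps) are finitely generated up to symmetry \emph{despite} the ambient ring not being Noetherian --- the proof exploits the lattice/monoid structure of the relations, not a general chain condition on the ambient ring. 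Your closing remark that ``FI-Noetherianity of the ambient ring is not by itself enough to upgrade set-theoretic to ideal-theoretic finiteness for $r \geq 2$'' hints at the right distinction but misattributes the source of the difficulty: the ambient ring is not Noetherian for \emph{any} $r$, including $r=1$, and the reason $r=1$ succeeds is the special toric machinery of \cite{DEKL}. Replacing the ambient-ring argument with a direct appeal to that theorem fixes the proof.
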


\begin{proof}
The statement about toric varieties $(r=1)$ follows  from
\cite[Theorem 1.1]{DEKL}. For $r \geq 2$, we apply the main theorem
in 
\cite{DEFM}. 
First we consider the varieties $\sigma_r(\mathcal{M}_{n,d})$.  Use the map (\ref{eq:paramm}) 
between two polynomial rings in countably many variables.  
This is a morphism of FI-algebras as in \cite[Section 1.1]{DEFM}. 
In the formulation of \cite[Corollary 1.1.2]{DEFM}, the parametrization 
takes the $rd \times \mathbb{N}$ matrix whose entries are
$\mu^{(j)}_{ki}$ to 
the $\mathbb{N} \times \cdots \times \mathbb{N}$ ($d$ times) tensor whose entries are 
$m_{i_1 \cdots i_n}$ viewed as degree-$d$ moments in $n$ dimensions. 
The closure of the image
is topologically ${\rm Sym}(\mathbb{N})$-Noetherian, which yields 
 set-theoretic finiteness for $\sigma_r(\mathcal{M}_{n,d})$.  
 The case of $\sigma_r(\mathcal{M}_{n,\lambda})$ is similar. 
\end{proof}

\begin{remark}
It is conjectured in \cite[Conjecture 1.1.3]{DEFM} that the main result
in \cite{DEFM}  holds ideal-theoretically. 
This would imply ideal-theoretic finiteness for
 $\sigma_r(\mathcal{M}_{n,d})$
and $\sigma_r(\mathcal{M}_{n,\lambda})$.
\end{remark}

Whenever ideal-theoretic finiteness holds,
one can try to use equivariant Gr\"obner bases~\cite{BD}
for computing the desired finite generating set.
An implementation for  the toric case is~described  in \cite{DEKL}, but
we found this to be quite slow. 
The case $\lambda = (11)$ 
is covered by Example~\ref{ex:pentadlimit}.

\begin{example}[Cycles in bipartite graphs] \label{ex:cycles} If $r=1$ and $\lambda = (21)$ then
ideal-theoretic finiteness holds with $n_0 = 4$.
Namely, the toric ideal of $\mathcal{M}_{n,(21)}$ is generated
by $6\binom{n}{4}$ quadrics and $\binom{n}{3}$ cubics. 
This follows from \cite[Lemma~1.1]{OH}. 
Indeed, the above
binomials correspond to the chordless cycles in the bipartite
graph that is obtained from $K_{n,n}$ by removing the
$n$ edges $(1,1),(2,2),\ldots,(n,n)$. For $n > 4$, every such 
chordless cycle is supported on
a bipartite subgraph of the same kind with $n_0 = 4$. 
\end{example}

\begin{example}[Hypersimplex]\label{ex:hypersimplex-finiteness} As seen in Section \ref{sec2}, when $r=1$ and $\lambda=(1^d)$, the moment variety $\mathcal{M}_{n,\lambda}$ is the toric variety associated to the hypersimplex $\Delta(n,d)$. Its ideal is generated by quadrics \cite[Section 14A]{GBCP}. The indices 
occurring in each quadratic binomial are $1$ in at most $2d$ of the $n$ coordinates.
Therefore, ideal-theoretic finiteness holds with $n_0=2d$.
\end{example}

We close with a corollary that generalizes the previous two examples.
Its proof rests on a forward reference to the next section, where we derive
various results for our toric ideals.

\begin{corollary}    
Fix a partition $\lambda$ with $e$ nonzero parts, fix $\,r=1$, and suppose that $n$ increases.   The
toric varieties $\mathcal{M}_{n,\lambda}$ satisfy
ideal-theoretic finiteness for some $\,n_0 \leq 3e$ where $e$ is the length of $\lambda$. 
\end{corollary}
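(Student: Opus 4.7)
The plan is to invoke the forward-referenced Theorem~\ref{thm:generated-cubics}, which asserts that for $r=1$ the toric ideal $I(\mathcal{M}_{n,\lambda})$ is generated by binomials of degree at most three. Granted this, the bound $n_0 \le 3e$ should fall out of a direct support-count applied to each such generator.

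First I would pin down the relevant combinatorial feature of the coordinates. Every variable $m_{i_1 \cdots i_n}$ on $\mathcal{M}_{n,\lambda}$ has exactly $e$ nonzero indices, since the multiset $\{i_1,\ldots,i_n\}\setminus\{0\}$ equals $\lambda$, a partition with $e$ parts. Call the set $\{k \in [n] : i_k > 0\}$ the \emph{support} of that variable; it is a subset of $[n]$ of size $e$. This notion extends to a binomial $m^\alpha - m^\beta$ by taking the union of the supports of all coordinate variables that occur in either monomial.

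Next I would examine a binomial generator $m^\alpha - m^\beta$ of degree at most three coming from Theorem~\ref{thm:generated-cubics}. Each side is a product of at most three coordinate variables, so each side contributes at most $3e$ positions to the support. Moreover, because the binomial lies in the toric ideal, the two monomials must map to the same product of parameters $\mu_{ki}$ under~(\ref{eq:param}); this forces the two sides to involve the same multiset of parameters, and in particular the same subset of $[n]$ as their support. Hence the whole binomial is supported on at most $3e$ positions. An appropriate element of $S_n$ carries this support into $\{1,2,\ldots,3e\}$, and under the inclusion $m_{i_1 \cdots i_{3e}} \mapsto m_{i_1 \cdots i_{3e} 0 \cdots 0}$ the resulting binomial sits in $I(\mathcal{M}_{3e,\lambda})$, which is a toric ideal in its own right.

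Since every generator of $I(\mathcal{M}_{n,\lambda})$ arises this way, I would conclude $I(\mathcal{M}_{n,\lambda}) = \langle S_n \cdot I(\mathcal{M}_{3e,\lambda})\rangle$ for all $n \ge 3e$, giving ideal-theoretic finiteness with $n_0 \le 3e$. The only substantive input is Theorem~\ref{thm:generated-cubics}; the rest is bookkeeping about supports. Accordingly, the hard part is not in proving this corollary but in establishing the cubic generation bound for the toric ideal in the first place. Once that ingredient is in hand, the present statement follows almost formally, and any future improvement on the degree bound (for instance, generation in degree two in situations like Example~\ref{ex:hypersimplex-finiteness}) would immediately sharpen $n_0$.
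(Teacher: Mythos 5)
Your proposal is correct and follows essentially the same route as the paper: invoke Theorem~\ref{thm:generated-cubics} to reduce to binomials of degree at most three, observe that each variable has support of size $e$, and use the equality of $A$-degrees to conclude that both monomials of each generating binomial are supported on the same set of at most $3e$ slots. The only addition in your write-up is the explicit final bookkeeping step (acting by $S_n$ and the zero-padding inclusion), which the paper leaves implicit.
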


\begin{proof}
Theorem~\ref{thm:generated-cubics} says that the ideal of
$\mathcal{M}_{n,\lambda}$ is generated by binomials of degree at most~$3$.
Each of the two monomials in such a binomial is a product of two or three
variables $m_{i_1 i_2 \cdots i_n}$. The two monomials have the
same $A$-degree, where $A$ is the matrix representing (\ref{eq:param}). This
implies that the
slots $\ell \in \{1,2,\ldots,n\}$ where a nonzero index
$i_\ell$ occurs are the same in both monomials.
The total number of such slots is at most $3e$.
This yields the bound $n_0 \leq 3e$.  
\end{proof}

\begin{remark}
Every partition $\lambda \vdash d$ satisfies $e \leq d$, and
equality holds only for $ \lambda = (1^d)$, as in
Example \ref{ex:hypersimplex-finiteness}.
 For $\lambda = (21)$ in Example \ref{ex:cycles}, we have $e=2$,
and this yields $n_0 = 2e= 4$.
\end{remark}

\section{Toric Combinatorics}
\label{sec4}

In this section we study $\mathcal{M}_{n,d}$ and $\mathcal{M}_{n,\lambda}$ for some partition $\lambda$ of $d$. 
With each such toric variety we associate a 0-1 matrix $A$ as in \cite{GBCP}
whose columns correspond to the monomials in (\ref{eq:param}).
The rank of $A$ is one more than the dimension of the projective toric variety.
We first show that  $\mathcal{M}_{n,d}$ has the expected dimension, namely
 the number of parameters minus one.

\begin{theorem}\label{thm:expected-dim}
    The dimension of the moment variety $\mathcal{M}_{n,d}$ is $$\min\left\{nd-1,\binom{n+d-1}{d}-1\right\}.$$
\end{theorem}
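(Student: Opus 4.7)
The plan is to recast the claim as a rank computation for the $0/1$ exponent matrix of the toric parametrization (\ref{eq:param}). Set $N := \binom{n+d-1}{d}$ and let $A \in \{0,1\}^{nd \times N}$ have rows indexed by pairs $(k, i) \in [n] \times [d]$, columns indexed by weak compositions $I = (i_1,\ldots,i_n)$ of $d$, and entries $A_{(k,i), I} = \mathbb{1}[i_k = i]$. Since $\sum_{k,i} i \cdot A_{(k,i),I} = d$ for every column, all columns lie on an affine hyperplane missing the origin, so $\dim \mathcal{M}_{n,d} = \rk(A) - 1$. The upper bound $\rk(A) \leq \min\{nd, N\}$ is immediate, so the theorem reduces to proving equality.

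The cases $n = 1$ and $n = 2$ are easy. For $n = 2$, the $d + 1$ columns are $e_{2,d}$, $e_{1,d}$, and $e_{1,i} + e_{2,d-i}$ for $i = 1, \ldots, d-1$; these have pairwise disjoint supports in $\mathbb{Z}^{2d}$, hence are linearly independent, and $\rk(A) = d + 1 = N$.

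For $n \geq 3$ the plan is to show that the rows of $A$ are linearly independent, giving $\rk(A) = nd$; one also checks $nd \leq N$ in this range by a direct binomial estimate. A linear dependence $\sum_{k,i} c_{k,i}\cdot \mathrm{row}(k,i) = 0$ translates into the condition $\sum_k c_{k, i_k} = 0$ for every composition $I$ of $d$, where we set $c_{k,0} := 0$. Permuting the two support positions in tuples of shape $(i, d-i, 0, \ldots, 0)$ shows that $c_{k, i}$ depends only on $i$, giving a function $c \colon \{0, 1, \ldots, d\} \to \mathbb{R}$ with $c(0) = 0$. Then the tuple $(d, 0, \ldots, 0)$ gives $c(d) = 0$; tuples $(i, d-i, 0, \ldots)$ give $c(i) + c(d-i) = 0$; and, when $d \geq 3$, tuples $(i, j, d-i-j, 0, \ldots)$ with all three entries at least $1$ (available because $n \geq 3$) give $c(i) + c(j) + c(d-i-j) = 0$. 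Combining the last two yields Cauchy's additive equation $c(i+j) = c(i) + c(j)$, so $c(i) = i \cdot c(1)$, and then $d \cdot c(1) = c(d) = 0$ forces $c \equiv 0$.

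The main obstacle I anticipate is the symmetry step collapsing $c_{k,i}$ to a function of $i$ alone: this needs the observation that for $n \geq 3$ one can reposition each nonzero entry of a two-support tuple to at least two alternative slots while leaving the other fixed, and then repeatedly equate the resulting linear relations. The low-degree edges $d \in \{1, 2\}$ with $n \geq 3$ must also be checked separately since the three-term relations are then unavailable, but each reduces to an immediate direct verification.
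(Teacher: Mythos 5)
Your proof is correct but takes a genuinely different route from the paper's. Both reduce the claim to showing $\rk(A) = \min\{nd, N\}$ for the $nd \times N$ exponent matrix $A$, but the paper establishes this by exhibiting an explicit invertible $nd \times nd$ submatrix, with a case split between $n > d$ (a block-triangular construction whose leading block comes from the hypersimplex matrix $\mathcal{M}_{n,(1^d)}$) and $n \leq d$ (a direct, rather terse, column selection). You instead argue dually: for $n \geq 3$ you show the \emph{rows} of $A$ are linearly independent by reading a hypothetical dependence as the functional equation $\sum_k c_{k,i_k} = 0$ over all compositions, using the existence of a third slot to collapse $c_{k,i}$ to a function $c(i)$ of $i$ alone, and then solving the resulting Cauchy-type system; this forces you to verify separately that $nd \leq N$ whenever $n \geq 3$ (true --- counting the $n$ one-support and $\binom{n}{2}(d-1)$ two-support compositions already gives $N - nd \geq \tfrac{n(d-1)(n-3)}{2} \geq 0$) and to handle $n \in \{1,2\}$ and $d \in \{1,2\}$ directly, which you do. Your approach avoids the paper's $n \lessgtr d$ dichotomy and is arguably more self-contained, whereas the paper's construction is more explicit and reuses the hypersimplex result; both are valid. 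One small imprecision to fix: the additive relation $c(i+j) = c(i) + c(j)$ is only available for $i+j \leq d-1$, so it does not directly yield $c(d) = d\,c(1)$; instead conclude from $c(d-1) = (d-1)c(1)$ together with the two-term relation $c(1) + c(d-1) = 0$ that $d\,c(1) = 0$, hence $c \equiv 0$.
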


\begin{proof}
First assume $n > d$. We will show that the $A$-matrix associated to the moment variety has rank $nd$ by displaying a nonzero $nd\times nd$ minor. Consider the $d$ special partitions
\begin{equation}
\label{eq:columnblocks}
(1,1,\ldots,1),\; (2,1,\ldots,1),\ldots, (d-2,\; 1,1),\; (d-1,1),\; (d).
\end{equation}
Each of these partitions induces (by permutation) at least $n$ columns in the $A$-matrix. For each $(k,1,\ldots,1)\vdash d$, pick $n$ of these columns such that $k$ appears in each of the $n$ spots. The principal submatrix of $A$ induced by all these columns is an $nd\times nd$ matrix of the form
$$
B\,\,=\,\, \begin{small} \left[ 
\begin{array}{c|c|c|c|c|c} 
  M & * & * & \ldots & * & 0\\ 
  \hline 
  0 & I_n &  0& \ldots & 0 & 0\\
  \hline
 0& 0 & I_n & \ldots & 0 & 0\\
 \hline
 0& 0 & 0 & \ddots & 0 & 0\\
 \hline
 0& 0 & 0 & \ldots & I_n & 0\\
 \hline
 0& 0 & 0 & \ldots & 0 & I_n\\
\end{array} 
\right] , \end{small}
$$
where the $d$ row blocks are labeled $(\mu_{k1}: k\in[n]),\ldots,(\mu_{kd}: k\in[n])$  and the $d$ column blocks are (\ref{eq:columnblocks}). The matrix $M$ gives
a column basis for the $A$-matrix of the hypersimplex variety $\mathcal{M}_{n,(1,1,\ldots,1 )}$, so 
it is invertible.
We conclude $\det B=\det M\neq 0$, and so $\text{rank}(A)=nd$.

Now suppose $n \leq d$. Index the columns of the $A$-matrix by permutations of $(i_1,\ldots, i_n)$ with $i_1+\cdots+i_n=d$ ordered reverse-lexicographically. Index the rows by
$\mu_{11}, \mu_{12},\ldots,\mu_{nd}$. The principal submatrix on the first $2d+1$ rows and columns 
is invertible, so the first $2d+1$ columns of $A$ are linearly independent. From the remaining columns, we pick $d(n-2)-1$ of them such that for every $j=2d+1,\ldots,nd$ exactly one has 1 in the $j$th coordinate. In this way we obtain $nd$ linearly independent columns of $A$. Therefore, $A$ has full rank.
\end{proof}

Given a partition $\lambda \vdash d $ padded by zeroes to have length $n$, we define a partition $\nu$,
called the {\em reduction} of $\lambda$.
Let $k_0 \geq \cdots  \geq k_s$ be the multiplicities
of the distinct parts  in $\lambda$.
Then 
\begin{equation}
\label{eq:reducedpartition}
   \nu\,\,=\,\,\bigl(\,\underbrace{s,\ldots,s}_{k_s},\,\underbrace{s-1,\ldots,s-1}_{k_{s-1}},\,\ldots\,,\,\underbrace{1,\ldots,1}_{k_1},\,\underbrace{0\ldots,0}_{k_0}\, \bigr).
\end{equation}   
We write $s$ for the largest part of $\nu$, so
$s+1$ is the number of distinct parts of $\lambda$.
For example, the partitions $(8,5,5,4)$ and $ (7,7,3,0)$ have
the  same reduction $\nu=(2,1,0,0)$, with $s=2$.

\begin{lemma}\label{lem:partition-identification}
If $\nu$ is the reduction of $\lambda$ then $|N_\lambda| = |N_\nu|$ and
     $\mathcal{M}_{n,\lambda}=\mathcal{M}_{n,\nu}$ in $\mathbb{P}^{|N_\nu| - 1}$.
\end{lemma}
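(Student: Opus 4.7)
The plan splits into two pieces: the cardinality $|N_\lambda|=|N_\nu|$ and the variety equality $\mathcal{M}_{n,\lambda}=\mathcal{M}_{n,\nu}$ in $\mathbb{P}^{|N_\nu|-1}$. The cardinality is an immediate multinomial count: both $N_\lambda$ and $N_\nu$ are sets of length-$n$ rearrangements of a multiset with the same multiplicity profile $(k_0,k_1,\ldots,k_s)$, so both have size $\binom{n}{k_0,k_1,\ldots,k_s}$.

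For the variety equality, the governing intuition is that the moment variety remembers only the multiplicity pattern of $\lambda$, not the numerical values of its parts, because aside from the convention $\mu_{k,0}=1$ the parameters $\mu_{k,v}$ are algebraically free. I would first identify ambient spaces: enumerate the distinct entries of padded $\lambda$ as $v_0,v_1,\ldots,v_s$ with $v_i$ of multiplicity $k_i$ (breaking any ties arbitrarily), and define the bijection $\pi\colon v_i\mapsto i$. The entrywise extension $(i_1,\ldots,i_n)\mapsto(\pi(i_1),\ldots,\pi(i_n))$ is a bijection $N_\lambda\leftrightarrow N_\nu$ identifying coordinates on $\mathbb{P}^{|N_\lambda|-1}\cong\mathbb{P}^{|N_\nu|-1}$.

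The central step is constructing a monomial map $\Phi$ between the two parameter tori that transforms the $\lambda$-parametrization into the $\nu$-parametrization, up to a tuple-independent scalar. Using the convention $\mu^{\lambda}_{k,0}=1$ to keep the formula uniform, I would set
\[
\mu^{\nu}_{k,\pi(v)}\;=\;\mu^{\lambda}_{k,v}\,/\,\mu^{\lambda}_{k,v_0}\qquad\text{for each }v\in\{v_1,\ldots,v_s\}.
\]
A direct expansion should then yield, for every tuple $t\in N_\lambda$,
\[
m^{\lambda}_t\big/m^{\nu}_{\pi(t)}\;=\;\prod_{k=1}^n \mu^{\lambda}_{k,v_0},
\]
which is independent of $t$ and hence a uniform projective scalar identifying the two parametrized images.

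The main obstacle is verifying this tuple-independence. In the trivial case $v_0=0$ the substitution reduces to the identity and both parametrizations agree directly, so assume $v_0\neq 0$. Partition the positions $k\in[n]$ into three disjoint classes according to whether $t_k=v_0$, $t_k=0$, or $t_k\in\{v_1,\ldots,v_s\}\setminus\{0\}$. A direct check shows that each class contributes a single factor $\mu^{\lambda}_{k,v_0}$ to the ratio $m^{\lambda}_t/m^{\nu}_{\pi(t)}$: (i) in the first class the $\lambda$-monomial has $\mu^{\lambda}_{k,v_0}$ while the $\nu$-monomial has no factor (zero-position of $\pi(t)$); (ii) in the second class the $\nu$-monomial has $\mu^{\nu}_{k,\pi(0)}=1/\mu^{\lambda}_{k,v_0}$ while the $\lambda$-monomial has no factor; (iii) in the third class the substitution contributes $\mu^{\lambda}_{k,t_k}$ vs.\ $\mu^{\lambda}_{k,t_k}/\mu^{\lambda}_{k,v_0}$, yielding $\mu^{\lambda}_{k,v_0}$ in the ratio. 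Multiplying over all $k\in[n]$ gives $\prod_{k=1}^n \mu^{\lambda}_{k,v_0}$ as claimed. Once this is verified, the rest is formal: $\Phi$ is surjective onto the $\nu$-parameter torus, and its fibers act on the $\lambda$-parametrization by uniform rescaling, which becomes a projective scalar on the image. Hence the two parametrized images, and therefore the toric varieties $\mathcal{M}_{n,\lambda}$ and $\mathcal{M}_{n,\nu}$, coincide.
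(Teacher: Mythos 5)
Your proof is correct, and it lands in the same place as the paper via a closely related but genuinely more explicit route. The paper's proof is a two-line shortcut: it lets $\mu_{k0}$ be a free unknown (rather than $1$), asserts that this does not change the image, and then observes that after this homogenization the parametrizations of $\mathcal{M}_{n,\lambda}$ and $\mathcal{M}_{n,\nu}$ become literally the same monomial map up to renaming the parameter indices by the bijection $v_i\mapsto i$. The step ``freeing $\mu_{k0}$ preserves the image'' is stated without justification in the paper; it holds because every tuple in $N_\lambda$ has exactly the same number of zero entries, so the extra factors contribute a constant twist of the ambient torus. Your argument avoids relying on this unproved assertion: instead of homogenizing, you build the monomial change of parameters $\Phi$ directly and verify by the three-way case analysis on $t_k\in\{v_0\}\cup\{0\}\cup\bigl(\{v_1,\ldots,v_s\}\setminus\{0\}\bigr)$ that each position contributes the same factor $\mu^\lambda_{k,v_0}$ to the ratio $m^\lambda_t/m^\nu_{\pi(t)}$. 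This yields a tuple-independent scalar, which is exactly the projective discrepancy that the paper absorbs by letting $\mu_{k0}$ vary. So the two proofs exploit the same structural fact --- the constancy of the zero-count across $N_\lambda$ --- but yours makes it a computation, while the paper's makes it a change of viewpoint. Your version is a bit longer but self-contained; the paper's is shorter but leaves a lemma implicit. One small presentational point: when $n=e$ (no zeros in padded $\lambda$, so $0\notin\{v_0,\ldots,v_s\}$) your case (ii) is vacuous and $\Phi$ has an extra free parameter $\mu^\lambda_{k,v_0}$ per $k$; it would help the reader to say this explicitly rather than leaving it to be inferred, but it does not affect correctness.
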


\begin{proof}
Let the $\mu_{k0}$ be unknowns in the monomial parametrization (\ref{eq:param}). The image of this
altered map also equals $\mathcal{M}_{n,\lambda}$.
The toric variety $\mathcal{M}_{n,\nu}$ has the same parametrization, after changing
the index $i \in \lambda$ in each parameter $\mu_{ki}$ to the corresponding entry in~$\nu$.
\end{proof}

\begin{example}[Hypersimplex]
If $s=1$ and $\lambda = (1^d)$ with $n/2 < d < n$ then
$\nu = (1^{n-d})$ in Lemma~\ref{lem:partition-identification}, and
we recover the identification of the hypersimplices $\Delta(n,d) $ and $ \Delta(n,n-d)$.
\end{example}

\begin{theorem}\label{thm:dim-partitions}
        The moment variety $\mathcal{M}_{n,\lambda} = \mathcal{M}_{n,\nu}$ has dimension $(n-1)s$,
        for  $\nu $ in \textup{(\ref{eq:reducedpartition})}.
\end{theorem}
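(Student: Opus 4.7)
By Lemma~\ref{lem:partition-identification} it suffices to compute $\dim \mathcal{M}_{n,\nu}$, where $\nu$ has distinct parts $\{0,1,\ldots,s\}$ with multiplicities $k_0 \geq \cdots \geq k_s \geq 1$ summing to $n$. In particular $k_0 \geq 1$, so $0$ really appears as a part of $\nu$. I plan to compute the rank of the exponent matrix $A$ of the parametrization~(\ref{eq:param}): its rows are indexed by the parameters $\mu_{k,i}$ for $k \in [n]$ and $i \in [s]$, its columns by permutations $\pi$ of $\nu$, and $A_{(k,i),\pi}=1$ exactly when $\pi(k)=i$, else $0$. The target is $\rk(A)=(n-1)s+1$, which I establish by showing that the left null space of $A$ has dimension exactly $s-1$.

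Step~1 exhibits $s-1$ explicit null vectors. Let $\mathbf{1}_i \in \mathbb{R}^{ns}$ denote the indicator of block $i$ (ones on rows $(k,i)$ for $k \in [n]$, zeros elsewhere). Each column of $A$ has exactly $k_i$ ones in block $i$, so $\mathbf{1}_i^{\!\top}\! A = k_i(1,1,\ldots,1)$. For $i \in \{2,\ldots,s\}$ the vector $\frac{1}{k_i}\mathbf{1}_i - \frac{1}{k_1}\mathbf{1}_1$ is therefore a left null vector, and these $s-1$ vectors are linearly independent. Hence $\rk(A) \leq ns-(s-1) = (n-1)s+1$.

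Step~2, the harder direction, is to show that every $x$ in the left null space is a combination of the vectors above. Fix $i \in [s]$ and distinct $k_1,k_2 \in [n]$. Because $k_i \geq 1$ and $k_0 \geq 1$, I can choose a permutation $\pi$ of $\nu$ with $\pi(k_1)=i$ and $\pi(k_2)=0$, and let $\pi'$ swap the values at positions $k_1$ and $k_2$. Subtracting the null-space equation $\sum_{k:\pi(k)\neq 0} x_{k,\pi(k)}=0$ for $\pi$ from that for $\pi'$ yields $x_{k_1,i}=x_{k_2,i}$. Varying $k_1,k_2,i$, I conclude that $x_{k,i}$ depends only on $i$, say $x_{k,i}=a_i$. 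Plugging back into the null-space equation for any single $\pi$ collapses it to the one scalar constraint $\sum_{i=1}^s k_i a_i = 0$. Hence the null space is $(s-1)$-dimensional, giving $\dim \mathcal{M}_{n,\nu} = \rk(A)-1 = (n-1)s$.

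The main obstacle is Step~2. It is essential here that $\nu$ contains a zero part, which is exactly what the reduction of Lemma~\ref{lem:partition-identification} provides: swapping an $i \geq 1$ with a $0$ by a single transposition yields the clean one-variable identity $x_{k_1,i}=x_{k_2,i}$, whereas exchanging two positive values would couple two blocks simultaneously and require a more intricate combinatorial propagation to untangle.
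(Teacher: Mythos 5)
Your proof is correct, and it takes a genuinely different route from the paper's. The paper proves $\operatorname{rank}(A) \geq (n-1)s+1$ by a double induction on $n$ and $s$ (with base case $s=1$, the hypersimplex), constructing an explicit set of $(n-1)s+1$ linearly independent columns, and the argument splits into subcases $k_s>1$ and $k_s=1$. You instead compute the left null space of $A$ exactly: the $s-1$ vectors $\frac{1}{k_i}\mathbf{1}_i - \frac{1}{k_1}\mathbf{1}_1$ give the easy upper bound on rank (which the paper obtains the same way, via the row-sum relations), and the transposition argument — swapping an $i$-entry with a $0$-entry of $\nu$ between two positions — shows that every left null vector is block-constant, so the null space reduces to the single relation $\sum_i k_i a_i = 0$. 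This gives the rank lower bound in one stroke, with no induction and no subcases.

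The key extra idea you contribute, and which you correctly flag, is that the reduction $\nu$ always contains at least one zero part (since $k_0 \geq 1$ by construction in \eqref{eq:reducedpartition}), so the swap $\pi \leftrightarrow \pi'$ changes only a single term in the linear form $\sum_{k:\pi(k)\neq 0} x_{k,\pi(k)}$, yielding the clean two-term cancellation $x_{p,i}=x_{q,i}$. Without a zero part available, the transposition would couple two blocks and the propagation would be messier — which is one way to understand why Lemma~\ref{lem:partition-identification} is a useful preliminary reduction, a point the paper's proof does not make explicit. Your proof also characterizes the left kernel explicitly (block-constant vectors subject to $\sum_i k_i a_i = 0$), which is a bit more informative than the paper's submatrix construction. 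One small caveat: your argument is vacuous for $s=0$ (then $A$ has no rows and the dimension is $0=(n-1)\cdot 0$, trivially), and for $s=1$ Step 1 produces no vectors and Step 2 forces $a_1=0$, recovering the hypersimplex case $n-1$ directly rather than quoting it as a base case.
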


\begin{proof}
    We must show that the $A$-matrix of $\mathcal{M}_{n,\nu}$ has rank $(n-1)s+1$. We proceed by induction on $s$, the base case $s=1$ being the hypersimplex.
     We partition the rows of $A$ into $s$ blocks $(\mu_{k1}: k\in[n]),\ldots,(\mu_{ks}: k\in[n])$. 
     The rows of $A$ in the $i$th block
     sum to the constant vector $(k_i,k_i,\ldots k_i)$. Hence, the rank of $A$ is bounded above by $(n-1)s+1$. We will show that this is also a lower bound by displaying an invertible submatrix of this size.
    
    First, assume that $k_s>1$. Consider the columns of the $A$-matrix indexed by $m_{i_1\cdots i_n}$ such that $i_1=s$. By induction on $n$, these columns induce a submatrix of rank $(n-2)s+1$. Hence, we may pick $ns-2s+1$ linearly independent columns from this set. Next, for each $j=0,\ldots,s-1$, pick a column indexed by some $m_{i_1\cdots i_n}$ with $i_1=j$. By construction, adding these $s$ columns does not introduce dependence relations. We have constructed a set of $ns-2s+1+s=(n-1)s+1$ linearly independent columns of $A$, so $A$ has the desired~rank.

    Now consider the case when $k_s=1$. Again, consider the columns of the $A$-matrix indexed by $m_{i_1\cdots i_n}$ such that $i_1=s$. By induction,
    but now also on $s$, these columns induce a submatrix of rank $(n-2)(s-1)+1$. Next, add $n-1$ columns that are indexed by $m_{i_1\cdots i_n}$ where $i_1=s-1$ and such that for each $j=2,\ldots, n$ there is an index with $i_j=s$. Finally, add $s-1$ columns indexed by $m_{i_1\cdots i_n}$ such that for each $j=s-2,\ldots, 0$ there is an index with $i_1=j$ and $m_{i_2}\neq s, s-1$. This way we obtain $(n-2)(s-1)+1+(n-1)+(s-1)=(n-1)s+1$ columns, which are linearly independent by construction. Therefore, $A$ has the desired rank.
\end{proof}

The toric variety $\mathcal{M}_{n,d}$ is an aggregate of the
$\mathcal{M}_{n,\lambda} $ for $\lambda \vdash d$,
but there is no easy transition. For instance,
ideal generators for $\mathcal{M}_{n,d}$ do not restrict to
ideal generators for~$\mathcal{M}_{n,\lambda}$.

\begin{example}[$n=d=4$] \label{ex:M44toric}
The partitions $\lambda = (4),(31),(22)$, $(211),(1111)$
have the reductions $\nu=(1),(21),(11),(21),()$
with $s=1,2,2,2,0$. Two nontrivial varieties
$\mathcal{M}_{4,\nu}$ are given by the off-diag-onal entries of
$4 \times 4$-matrices. The variety $\mathcal{M}_{4,4} $
has dimension $15$ and degree $1072$ in $\mathbb{P}^{34}$, and its ideal
is generated by $52$ quadrics and $28$ cubics. 
The subset which involves the twelve unknowns
$m_{2110}, \ldots,$ $m_{0112}$ does not suffice to cut out
 $\mathcal{M}_{4,(211)}$ in $\mathbb{P}^{11}$. The ideal 
of  $\mathcal{M}_{4,(211)}$
 is generated by
$6$ quadrics and $4$ cubics, 
namely the cycles in Example~\ref{ex:cycles}.
\end{example}

The toric ideals for individual partitions are very nice.  Our next result builds upon \cite{YOT}.

\begin{theorem} \label{thm:generated-cubics}
For any partition $\lambda$, the ideal of ${\mathcal{M}_{n, \lambda}}$ is generated by quadrics and cubics.
\end{theorem}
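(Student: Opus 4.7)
My plan is to use Sturmfels' theory: since primitive binomials form a universal Gr\"obner basis of the toric ideal, it suffices to show that every primitive binomial in the toric ideal of $\mathcal{M}_{n,\lambda}$ has degree at most three. First, by Lemma~\ref{lem:partition-identification}, I may assume $\lambda$ is in the reduced form \textup{(\ref{eq:reducedpartition})}, so its non-zero parts take values in $\{1,2,\ldots,s\}$. Encoding a monomial $m_{I^{(1)}} \cdots m_{I^{(t)}}$ of degree $t$ as a $t \times n$ matrix $R$ whose rows are permutations of $\lambda$ padded by zeros, a binomial $m^R - m^S$ lies in the toric ideal exactly when $R$ and $S$ have identical column multisets. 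Such a binomial is primitive precisely when $R$ and $S$ share no common row and no proper sub-selection of the rows of $R$ matches a sub-selection of the rows of $S$ column-multiset-wise.

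The strategy is then to adapt the combinatorial argument of Yamaguchi--Ogawa--Takemura \cite{YOT}, who established the result for the Birkhoff case $\lambda = (n-1, n-2, \ldots, 1, 0)$. For each column $k$, the matching column multisets yield pairings of rows of $R$ with rows of $S$ sharing the same value at column $k$. Primitivity forces the combinatorial structure obtained by composing these pairings across columns to be connected, and rules out cycles of length less than $t$. If $t \geq 4$, a pigeonhole and short-cycle argument should produce a cycle of length $2$ or $3$, yielding a sub-move of that size---a contradiction.

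The main obstacle is extending YOT's argument from the Birkhoff case, where all parts of $\lambda$ are distinct, to partitions with repeated parts, where a single row may carry the same value in several columns and the column pairings above are no longer uniquely determined. The cleanest route seems to be induction on $s$ in \textup{(\ref{eq:reducedpartition})}. The base case $s = 1$ is the hypersimplex, whose toric ideal is generated by quadrics by \cite[Section~14A]{GBCP} (cf.\ Example~\ref{ex:hypersimplex-finiteness}). For the inductive step, one restricts attention to the columns hosting the maximum value $s$, applies the induction hypothesis to the sub-partition with $s-1$ distinct non-zero parts, and glues the two layers together using quadratic moves available at the remaining columns. Verifying that the extracted sub-move is both proper and of degree at most three is the most delicate bookkeeping, and where I expect the real work to lie.
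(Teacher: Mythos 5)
Your opening reduction is flawed. You propose to show that every \emph{primitive} binomial in the toric ideal of $\mathcal{M}_{n,\lambda}$ has degree at most three, on the grounds that primitive binomials form a universal Gr\"obner basis. The implication is sound, but you are trying to prove a strictly stronger statement than the theorem, and that stronger statement is \emph{false}. Already for $\lambda = (1,1)$ the toric ideal is the edge ideal of $K_n$ and is generated by quadrics, yet in $K_8$ the binomial arising from the $8$-cycle,
\[
m_{12}\, m_{34}\, m_{56}\, m_{78} \;-\; m_{23}\, m_{45}\, m_{67}\, m_{81},
\]
is primitive of degree $4$ (no proper sub-selection of its monomial factors has matching column multisets, as a direct check shows), and analogous even-cycle binomials give primitive elements of degree growing with $n$. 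So bounding the Graver basis degree cannot succeed; the Markov (generation) degree can be $\leq 3$ while the Graver degree is unbounded. Your induction-on-$s$ sketch is also not carried out and you yourself flag the inductive step as ``where the real work lies,'' but the real obstruction is that the target of your argument is unachievable.

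The paper proceeds entirely differently. It does not attempt to extend or re-prove the Yamaguchi--Ogawa--Takemura combinatorics inside the repeated-parts setting. Instead it sets $\eta = (e, e-1, \ldots, 1)$ where $e$ is the length of $\lambda$, so that $\mathcal{M}_{n,\eta}$ is of Birkhoff type and its toric ideal $J$ is known from \cite{YOT} to be generated in degree at most $3$. It then defines a surjective, $\mathbb{N}$-degree-preserving ring homomorphism $\varphi\colon S \to R$ that relabels the index $e$ by $\lambda_1$, $e-1$ by $\lambda_2$, and so on, and proves $\varphi(J) = I$. The inclusion $\varphi(J) \subseteq I$ is immediate from the column-multiset description of binomials in a toric ideal; the nontrivial inclusion $I \subseteq \varphi(J)$ is established by lifting a binomial of $I$ to one of $J$, choosing the relabeling one distinct value of $\lambda$ at a time, where the existence of a valid choice follows from Hall's Marriage Theorem applied to a bipartite multigraph built from the two exponent matrices. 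Since $\varphi$ preserves degree and $J$ is generated in degrees $2$ and $3$, so is $I = \varphi(J)$. This black-box reduction is what makes the proof short; if you want to salvage your own route, you would have to abandon the primitive-binomial framework and argue about Markov bases directly, at which point you are essentially reproducing \cite{YOT}, and the surjection-plus-Hall trick is the cleaner way to transfer their result.
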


\begin{proof}
For a partition $\lambda= (\lambda_1,\ldots,\lambda_{e})$, set $R = \mathbb{R}[m_{i_1 i_2 \cdots i_n} : \{i_1, \ldots, i_n\} \text{ is } \lambda]$. Let $I \subset R$ be the toric ideal defining $\mathcal{M}_{n, \lambda}$. Note that $\eta = (e, e-1, \ldots, 1)$ is a partition of the same length but possibly with a different sum.  Let $J \subset S$ be the toric ideal defining $\mathcal{M}_{n, \eta}$ where $S = \mathbb{R}[m_{j_1 j_2 \cdots j_n} : \{j_1, \ldots, j_n\} \text{ is } \eta]$.
Define a surjective ring homomorphism $\varphi : S \rightarrow R$ by mapping $m_{j_1 j_2 \cdots j_n}$ to $m_{i_1 i_2 \cdots i_n}$ where $\{i_1, \ldots, i_n\}$ is obtained from $\{j_1, \ldots, j_n\}$ by replacing $e$ with $\lambda_1$, replacing $e-1$ with $\lambda_2$, etc.
We claim that $\varphi(J) = I$.  
This implies the theorem because $J$ is generated by quadrics and cubics \cite[Theorem~2.1]{YOT} and $\varphi$ preserves $\mathbb{N}$-degree.

Since $I$ and $J$ are toric, we may verify $\varphi(J) = I$ on binomials.  
To show $\varphi(J) \subseteq I$, fix a binomial in $J$, say of degree $\delta$, written as
$\mathbf{b} = \prod_{\alpha=1}^\delta m_{j_{\alpha 1} \cdots j_{\alpha n}} - \prod_{\alpha=1}^\delta m_{j'_{\alpha 1} \cdots j'_{\alpha n}}$.
Encode this by two $\delta \times n$ matrices $B = (j_{\alpha \beta})$
and $C = (j'_{\alpha \beta})$.  
Membership in $J$ means that substituting the parametrization \eqref{eq:param} into $\mathbf{b}$
gives the result $0$, and
this is equivalent to the multiset of entries in corresponding columns of $B$ and $C$ being equal.
This property is preserved after replacing $e$ by $\lambda_1$, replacing $e-1$ by $\lambda_2$ etc. throughout $B$ and $C$.  So $\varphi(\mathbf{b}) \in I$ as desired.

To prove $I \subseteq \varphi(J)$, let $\mathbf{c} = \prod_{\alpha=1}^\delta m_{i_{\alpha 1} \cdots i_{\alpha n}} - \prod_{\alpha = 1}^\delta m_{i'_{\alpha 1} \cdots i'_{\alpha n}}$ be a binomial in $I$ encoded by matrices $D = (i_{\alpha \beta})$
and $E = (i'_{\alpha \beta})$.
We will construct a binomial $\mathbf{d} \in J$ such that $\varphi(\mathbf{d}) = \mathbf{c}$. 
In terms of matrices $D$ and $E$, in each of their rows we must choose one element that equals $\lambda_1$ and replace it by $e$, then choose another element that equals $\lambda_2$ and replace it by $e-1$, and so forth until the set of nonzero elements in each row has been replaced by $[e]$, in such a way so that the multiset of entries in corresponding columns of the transformed matrices $D$ and $E$ are equal.
To achieve this it suffices to consider distinct values in $\lambda$ one at a time.   

Without loss of generality, assume $\lambda = (1^e)$.
Now $D$ and $E$ have $e$ ones and $n-e$ zeros per each row.
To choose the elements to replace by $e$, we consider a bipartite multigraph between the rows of $D$ and the rows of $E$, where an edge is drawn between a row in $D$ and a row in $E$ for every column in which there is a $1$ in both rows.
A perfect matching would give a valid choice of elements to replace by $e$. 
Such a matching exists by Hall's Marriage Theorem.  Indeed, for any subset $W$ of rows in $D$ their neighborhood must contain at least $|W|$ rows in $E$. Otherwise, there exists a column in $D$ with more ones than the corresponding column in $E$, since each row contains the same number of ones.  But this contradicts $\mathbf{c} \in I$.
Similarly, we carry out the subsequent replacements.  
Thus a suitable binomial $\mathbf{d}$ exists.  
It follows $I \subseteq \varphi(J)$.  Combining with the preceding paragraph, we conclude $\varphi(J)=I$.
\end{proof}

By contrast, the ideals for $\mathcal{M}_{n,d}$ appear to be more complicated.  
We conjecture that there does not exist a uniform degree bound for their generators that is independent of $n, d$.

\begin{example}[$n{=}3, d{=}7$]
The toric variety $\mathcal{M}_{3,7}$ has dimension $20$ and degree $14922$ in $\mathbb{P}^{35}$.  Its ideal is minimally generated by $46$ cubics, $168$ quartics, $135$ quintics and $18$~sextics. 
\end{example}

\section{Secant Varieties}
\label{sec5}

Theorems  \ref{thm:expected-dim} and \ref{thm:dim-partitions}
gave the dimensions
of our moment varieties for $r=1$. 
We next focus on  $r \geq 2$, where $\sigma_r(\mathcal{M}_{n,d})$
and $\sigma_r(\mathcal{M}_{n,\lambda})$ are no longer toric.
We begin with an example.

\begin{example}[$n=5, d=3$] \label{ex:M53toric}
The toric variety $\mathcal{M}_{5,3}$ and its secant variety
$\sigma_2(\mathcal{M}_{5,3})$ live~in the projective space $\mathbb{P}^{34}$
of symmetric $5 \times 5 \times 5$ tensors;
see Example \ref{ex:zwei}.
By Theorem  \ref{thm:expected-dim},
we have ${\rm dim}(\mathcal{M}_{5,3}) = 14$.
The expected dimension of the secant variety $\sigma_2(\mathcal{M}_{5,3})$ would be $2 \cdot 14+1 = 29$.
However, we must subtract $5=4+1$ because
 $\mathcal{M}_{5,3}$ is a cone with apex 
$\sigma_2(\mathcal{M}_{5,(3)}) = \mathbb{P}^4$.
Therefore, $\sigma_2(\mathcal{M}_{5,3})$ has dimension $24$.
The prime ideal of $\sigma_2(\mathcal{M}_{5,3})$ will be
presented in Proposition \ref{prop:M53}. 
\end{example}

Our first result explains the drop in dimension seen in the example above.

\begin{proposition} \label{prop:expdim}
The dimension of the moment variety satisfies the upper bound
\begin{equation}
\label{eq:expdim} 
\begin{matrix} 
{\rm dim} \bigl(\sigma_r(\mathcal{M}_{n,d}) \bigr) \leq 
{\rm min} \bigl\{\,   rnd-rn+n-1,
 \,
\binom{n+d-1}{d}-1 \,\bigr\}.  
\end{matrix}
\end{equation}
\end{proposition}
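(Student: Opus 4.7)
The plan is to exploit the cone structure of $\mathcal{M}_{n,d}$ already visible in Example \ref{ex:M53toric}. The second bound in (\ref{eq:expdim}) is automatic since $\sigma_r(\mathcal{M}_{n,d})$ lives in $\mathbb{P}^{\binom{n+d-1}{d}-1}$, so only the first bound $rnd - rn + n - 1$ requires an argument.

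First I would identify the apex of the cone. For each $k \in \{1,\ldots,n\}$, the pure coordinate $m_{0\cdots 0 d 0 \cdots 0}$ (with $d$ in slot $k$) equals $\mu_{kd}$ by (\ref{eq:param}), and the parameter $\mu_{kd}$ appears in no other coordinate of the parametrization: the constraint $i_1 + \cdots + i_n = d$ combined with $i_k = d$ forces $i_\ell = 0$ for all $\ell \neq k$. Let $L \subset \mathbb{P}^{\binom{n+d-1}{d}-1}$ be the $(n{-}1)$-dimensional linear subspace spanned by these $n$ pure coordinates. The observation above shows that adding any vector of $L$ to a point of $\mathcal{M}_{n,d}$ stays in $\mathcal{M}_{n,d}$ (in the affine cone), so $\mathcal{M}_{n,d} = L \star X'$, where $X'$ is the image of the parametrization after setting all $\mu_{kd} = 0$, sitting inside the complement of $L$.

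Next I would apply the standard identity $\sigma_r(L \star X') = L \star \sigma_r(X')$, valid whenever the apex is linear: any sum of $r$ points $\ell_i + x_i$ of $L \star X'$ equals $\bigl(\sum \ell_i\bigr) + \bigl(\sum x_i\bigr) \in L + \sigma_r(X')$, and the reverse inclusion is even easier. Then I would bound dimensions. The variety $X'$ is parametrized by the $n(d-1)$ unknowns $\{\mu_{ki} : 1 \leq k \leq n,\, 1 \leq i \leq d-1\}$, so $\dim X' \leq n(d-1)-1$, and hence $\dim \sigma_r(X') \leq r \dim X' + r - 1 \leq rn(d-1) - 1$. Combining with the join bound $\dim (L \star Y) \leq \dim L + \dim Y + 1$ yields
$$\dim \sigma_r(\mathcal{M}_{n,d}) \,\leq\, (n-1) + 1 + rn(d-1) - 1 \,=\, rnd - rn + n - 1,$$
as claimed.

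No step is genuinely difficult; the only piece of real content is the cone-apex observation, which is a clean consequence of the squarefree monomial structure of (\ref{eq:param}). Once one knows $\mathcal{M}_{n,d} = L \star X'$, the secant-join identity and the naive parameter count do the rest. The main subtlety to verify carefully is that $L$ really acts as a full apex, i.e.\ that the $\mu_{kd}$ are independent free directions rather than entangled with the remaining parameters; this is exactly what the single-monomial appearance of $\mu_{kd}$ guarantees.
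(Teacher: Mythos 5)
Your proposal is correct and follows essentially the same route as the paper: identify the apex $\mathcal{M}_{n,(d)} = \mathbb{P}^{n-1}$ spanned by the pure moments, write $\mathcal{M}_{n,d}$ as the join of this apex with the subvariety on the non-pure coordinates (the paper calls it $\widetilde{\mathcal{M}}_{n,d}$, your $X'$), commute the join with the secant construction, and count parameters. You spell out two details that the paper states without proof — why the pure coordinates form a genuine apex (the fact that $\mu_{kd}$ appears only in the pure moment $m_{0\cdots 0 d 0 \cdots 0}$) and why $\sigma_r(L \star X') = L \star \sigma_r(X')$ for linear apex — but the decomposition, the key lemma, and the final arithmetic are identical.
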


\begin{proof}
The given toric variety is a cone over the
projective space $\mathcal{M}_{n,(d)} = \mathbb{P}^{n-1}$. In symbols,
$ \mathcal{M}_{n,d} =  \mathbb{P}^{n-1} \star  \widetilde{\mathcal{M}}_{n,d}, $
where $\widetilde{\mathcal{M}}_{n,d}$ is the toric variety given by
 all $\binom{n+d-1}{d}-n$ moments that involve more than one coordinate.
By counting parameters, we find ${\rm dim}(\widetilde{\mathcal{M}}_{n,d}) \leq n(d-1)-1$.
We obtain the  secant variety of the big toric variety as the join of the apex 
with the reduced toric variety:
$ \sigma_r(\mathcal{M}_{n,d})  =  \mathbb{P}^{n-1}  \star 
\sigma_r(\widetilde{\mathcal{M}}_{n,d}) .$
The dimension of the right-hand side is bounded above by
$ n + r \bigl( {\rm dim}(\widetilde{\mathcal{M}}_{n,d})
\bigr) + r-1 \, \leq \,
 n + r \cdot \bigl( n(d-1)-1\bigr) + r-1.  $
 This yields  (\ref{eq:expdim}).
\end{proof}

We found the inequality (\ref{eq:expdim}) to be strict when $r \geq n$.  The following sharper bound holds.  (To see it is sharper, consider $S = [d]$ and $S=\{d\}$ in \eqref{eq:ILP}.)

\begin{theorem} \label{thm:integerprogram}
The dimension of the secant variety $\sigma_r(\mathcal{M}_{n,d})$ is 
bounded above by the optimal value of the following integer linear programming problem:
\begin{equation}
\label{eq:ILP}
\begin{split}
&{\rm maximize} \,\,c_1 + c_2 + \cdots + c_d - 1  \\
&  \hbox{\rm subject to} \quad
\,\,0\,\, \leq \,c_i\, \leq \,nr \,\,\,\, \hbox{\rm for} \,\,\, i \in [d] \\ &  {\rm and} \,\,
\sum_{i \in S} c_i \,\leq \,\sum_{\lambda \cap S \neq \emptyset} |N_{\lambda}| \quad {\rm for} \,\,\, S \subseteq [d].
\end{split}
\end{equation}
The last sum ranges over partitions $\lambda \vdash d$ of length $\leq n$ having nonempty 
intersection with~$S$. 
\end{theorem}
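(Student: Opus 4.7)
The plan is to bound $\dim \sigma_r(\mathcal{M}_{n,d})$ by analyzing the generic rank of the Jacobian $J$ of the parametrization~\eqref{eq:paramm}, and then to translate the support pattern of $J$ directly into the constraints of~\eqref{eq:ILP} via a short matroid argument. Let $J$ have rows indexed by the $nrd$ parameters $\mu_{k,i}^{(j)}$ (with $k \in [n]$, $i \in [d]$, $j \in [r]$) and columns indexed by the $\binom{n+d-1}{d}$ moments $m_{i_1 \cdots i_n}$. At a generic parameter point one has $\dim \sigma_r(\mathcal{M}_{n,d}) = \mathrm{rank}(J) - 1$, the subtraction of one accounting for the passage to projective coordinates.

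First I would compute
\[
\frac{\partial\, m_{i_1\cdots i_n}}{\partial\, \mu_{k,i}^{(j)}} \,=\, \delta_{i,i_k}\,\prod_{\ell \neq k} \mu_{\ell,i_\ell}^{(j)},
\]
so the row of $J$ indexed by $\mu_{k,i}^{(j)}$ is nonzero only in those columns where $i_k = i$, that is, where $i$ is a part of the partition $\lambda = \{i_1,\ldots,i_n\}\backslash\{0\}$. Next I would partition the parameters into $d$ blocks $E_i := \{\mu_{k,i}^{(j)} : k \in [n],\, j \in [r]\}$, each of size $nr$, according to their second index. The key combinatorial observation is that for every $S \subseteq [d]$, the rows of $J$ indexed by $\bigcup_{i\in S} E_i$ are supported exactly on those columns whose partition type $\lambda$ satisfies $\lambda \cap S \neq \emptyset$. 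Consequently the rank of this row block is bounded above by the number of such columns, namely $\sum_{\lambda \cap S \neq \emptyset} |N_\lambda|$.

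To finish, let $\mathcal{N}$ be the linear matroid on the ground set of parameters defined by row-independence in $J$ at a generic point, choose a basis $B$ of $\mathcal{N}$, and set $c_i := |B \cap E_i|$. Then $0 \leq c_i \leq |E_i| = nr$, and since $B \cap \bigcup_{i\in S} E_i$ is independent in $\mathcal{N}$,
\[
\sum_{i \in S} c_i \,=\, \bigl|\,B \cap \textstyle\bigcup_{i \in S} E_i\,\bigr| \,\leq\, \mathrm{rank}_{\mathcal{N}}\bigl(\textstyle\bigcup_{i \in S} E_i\bigr) \,\leq\, \sum_{\lambda \cap S \neq \emptyset} |N_\lambda|.
\]
Thus $(c_1,\ldots,c_d)$ is feasible for~\eqref{eq:ILP}, and its objective value $\sum_i c_i - 1 = |B| - 1 = \mathrm{rank}(J) - 1 = \dim\sigma_r(\mathcal{M}_{n,d})$ is the dimension we wish to bound; the theorem follows.

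The one point requiring care is the affine-versus-projective identity $\dim \sigma_r(\mathcal{M}_{n,d}) = \mathrm{rank}(J) - 1$, which is not entirely automatic since~\eqref{eq:paramm} is inhomogeneous across moments of differing partition length. The cleanest remedy is to enlarge the parameter space by treating $\mu_{k,0}^{(j)}$ as free variables; the resulting map is homogeneous of degree $n$, its image is a genuine affine cone over $\sigma_r(\mathcal{M}_{n,d})$, and a short Euler-type check shows the row rank (and the relevant support pattern) is unaffected. Beyond this bookkeeping, the argument is simply a combinatorial reading of the Jacobian's block-support structure, and no deeper obstacle is expected.
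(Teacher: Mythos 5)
Your proposal is correct and follows essentially the same approach as the paper: you bound the generic Jacobian rank by grouping the parameter directions $\mu^{(j)}_{ki}$ into the $d$ blocks indexed by $i$, and the block-sparsity observation that $\partial m_{i_1\cdots i_n}/\partial \mu^{(j)}_{ki}\neq 0$ forces $i\in\lambda$ yields exactly the constraint $\sum_{i\in S}c_i\leq\sum_{\lambda\cap S\neq\emptyset}|N_\lambda|$, just phrased in matroid language and with the Jacobian transposed relative to the paper. Your closing remark flagging the affine-versus-projective identity $\dim\sigma_r(\mathcal{M}_{n,d})=\operatorname{rank}(J)-1$ is a legitimate point of care that the paper states without justification; it is resolved by checking that the all-ones vector lies in the row span of the exponent matrix of \eqref{eq:param} (take $\gamma_i=i/d$), so the image of the affine parametrization is indeed the full affine cone.
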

\begin{proof}
The secant variety $\sigma_r(\mathcal{M}_{n,d})$ is parameterized by the polynomial map \eqref{eq:paramm}.  Therefore its dimension is one less than the maximal rank assumed by the differential of \eqref{eq:paramm}.
This Jacobian matrix has size $\binom{n+d-1}{d} \times nrd$, where the rows are labeled by  $m_{i_1 i_2 \cdots i_n}$ such that $i_1, \ldots ,i_n \geq 0$ and $i_1 + \cdots + i_n =d$, and the columns are labeled by  $\mu_{1 i}^{(j)},  \ldots, \mu_{n i}^{(j)}$ for $i=1, \ldots, d$ and $j=1, \ldots, r$.
We view this as a block matrix, where the rows are grouped according to the partition $\lambda$ given by $(i_1, \ldots, i_n)$ and the columns are grouped according to the degree $i$.  Notice that the matrix is sparse, in that a block labeled by $(\lambda, i)$ is nonzero only if $i \in \lambda$.  

Let $\mathcal{C}$ be a set of linearly independent columns in the Jacobian matrix, with $c_i$ columns labeled by $i$. 
The integers $c_i$ satisfy $0 \leq c_i \leq nr$ for $i = 1, \ldots, d$.  
Let $ S \subseteq [d]$ and $\mathcal{C}'$ the subset of columns in $ \mathcal{C}$ 
that are labeled by elements of $S$.
Since $\mathcal{C}'$ is linearly independent,
the number of rows which are nonzero in $\mathcal{C}$ exceeds $|\mathcal{C}'|$.
  By the aforementioned sparsity, 
\begin{equation}\label{eq:joe-bound}
\sum_{i \in S} c_i \quad \quad \leq \quad \sum_{\substack{\lambda \vdash d, \,\,\,  \lambda \cap S \neq \emptyset, \\[1pt] \textup{length of } \lambda \textup{ is at most } n}} \!\!\! |N_{\lambda}|.
\end{equation}
We conclude that $|\mathcal{C}|-1$ is bounded  above by the
maximum value in (\ref{eq:ILP}), as desired.
\end{proof}

Solving an integer linear program is expensive in general.  
However, the integer linear program  in (\ref{eq:ILP}) has a special structure which allows for
a greedy solution that is optimal.

\begin{theorem}\label{thm:greedy}
We construct a feasible solution for (\ref{eq:ILP}) greedily, starting with
 $\mathbf{c}^{(0)} = 0 $ in $\mathbb{Z}^d$. For $t = 1, \ldots, r$, choose $\mathbf{c}^{(t)} \in \mathbb{Z}^{d}$ such that $c_{i}^{(t-1)} \leq {c}_{i}^{(t)} \leq {c}_{i}^{(t-1)} + n$
  for all $i \in [d]$,   and if ${c}_{i}^{(t)} < {c}_{i}^{(t-1)} + n$ then there exists $S \subseteq [d]$ containing $i$ such that $\sum_{j \in S} c_{j}^{(t)} = \sum_{\lambda \cap S \neq \emptyset} |N_{\lambda}|$. 
   Then $\mathbf{c}^{(r)} \in \mathbb{Z}^d$  is optimal for the integer linear program (\ref{eq:ILP}).
\end{theorem}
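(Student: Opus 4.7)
The overall plan is to recognize the set function $g(S) := \sum_{\lambda \cap S \neq \emptyset} |N_\lambda|$ as submodular, so that the constraints $c(S) \leq g(S)$ in (\ref{eq:ILP}) define a polymatroid, and then to use the standard lattice structure of tight sets to compare $\mathbf{c}^{(r)}$ against any feasible competitor via the maximum tight set.

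First I would verify submodularity of $g$. Viewing each partition $\lambda$ (identified with its support, a subset of $[d]$) as an element of weight $|N_\lambda|$, the value $g(S)$ is the total weight of partitions whose support meets $S$. The marginal $g(S \cup \{j\}) - g(S)$ equals the weight of partitions $\lambda$ containing $j$ with $\lambda \cap S = \emptyset$, which is nonincreasing in $S$; hence $g$ is a monotone, submodular weighted coverage function. Alongside this, I would record two trajectory facts, under the implicit understanding that each $\mathbf{c}^{(t)}$ is feasible (the stated greedy condition is only meaningful at a maximal feasible point): first, telescoping the box increments gives $c_i^{(r)} \leq nr$, so $\mathbf{c}^{(r)}$ satisfies (\ref{eq:ILP}); second, if $S$ is tight at some step $t$, then monotonicity $\mathbf{c}^{(r)} \geq \mathbf{c}^{(t)}$ combined with feasibility at step $r$ forces $c^{(r)}(S) = g(S)$, so tight constraints persist.

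The key step is to show that the tight sets at $\mathbf{c}^{(r)}$ are closed under union, via the standard polymatroid argument: if $S$ and $T$ are tight, then
$$ c^{(r)}(S \cup T) + c^{(r)}(S \cap T) \,=\, g(S) + g(T) \,\geq\, g(S \cup T) + g(S \cap T) \,\geq\, c^{(r)}(S \cup T) + c^{(r)}(S \cap T), $$
forcing equalities throughout. Hence there is a unique maximum tight set $T^* \subseteq [d]$ with $c^{(r)}(T^*) = g(T^*)$. I would then argue $\{i : c_i^{(r)} < nr\} \subseteq T^*$: if $c_i^{(r)} < nr$, telescoping produces a step $t$ with $c_i^{(t)} < c_i^{(t-1)} + n$, so the greedy condition supplies a tight $S \ni i$ at step $t$; this $S$ persists to step $r$ and lies in $T^*$, whence $i \in T^*$. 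Equivalently, $c_i^{(r)} = nr$ for every $i \notin T^*$.

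Finally, for any feasible $\mathbf{c}^*$ one computes
$$ \sum_{i=1}^d c_i^* \,=\, c^*(T^*) + c^*([d] \setminus T^*) \,\leq\, g(T^*) + nr \cdot |[d] \setminus T^*| \,=\, c^{(r)}(T^*) + c^{(r)}([d] \setminus T^*) \,=\, \sum_{i=1}^d c_i^{(r)}, $$
so $\mathbf{c}^{(r)}$ attains the maximum of (\ref{eq:ILP}). The main obstacle is the initial observation that $g$ is a coverage function and hence submodular; once that is in hand, the remainder is a standard polymatroid greedy optimality argument, the only subtle point being to verify that tight constraints persist across iterations via monotonicity of the coordinates.
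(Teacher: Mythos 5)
Your proof is correct, and it shares the paper's essential combinatorial lemma: that the collection of saturated (tight) constraints at $\mathbf{c}^{(r)}$ is closed under union, yielding a unique maximal saturated set $T^*$, with $c^{(r)}_i = nr$ for every $i \notin T^*$. The route differs in two respects. First, where the paper establishes union-closure by a direct manipulation of the sums $\sum_{\lambda \cap \cdot \neq \emptyset} |N_\lambda|$, you package the same fact conceptually by observing that $g(S) = \sum_{\lambda \cap S \neq \emptyset}|N_\lambda|$ is a weighted coverage function, hence submodular, and then invoke the standard polymatroid lattice argument; this is a cleaner framing of the same computation. Second, and more substantively, the finishing step differs: the paper writes down the dual LP explicitly and exhibits a dual feasible point $(\mathbf{y}^{(r)},\mathbf{z}^{(r)})$ whose objective matches the primal, whereas you give a direct primal domination argument, splitting $[d]$ into $T^*$ and its complement and bounding any feasible competitor $\mathbf{c}^*$ by $c^*(T^*)\leq g(T^*)$ and $c^*_i \leq nr$ off $T^*$. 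Your version is arguably more self-contained since it avoids the duality machinery, and it also spells out explicitly two facts the paper treats somewhat implicitly in its final sentence: that tight constraints persist across iterations (via monotonicity of the $\mathbf{c}^{(t)}$ and feasibility at step $r$), and that $c^{(r)}_i = nr$ for all $i\notin T^*$ (via telescoping and the greedy stopping rule). Both approaches are valid; yours buys a more elementary certificate at the cost of not exhibiting the dual optimal solution, which the paper's version provides as a by-product.
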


\begin{proof}
We claim that $\mathbf{c}^{(r)}$ is optimal for the linear program (\ref{eq:ILP}), with integrality constraints dropped. 
The dual linear program has variables $y_S$ for $\emptyset \neq S \subseteq [d]$ and $z_i$ 
for $i \in [d]$. This dual linear program equals:
\vspace{-0.5em}
\begin{align*}
& {\rm minimize}\,
 \,\,\, nr(z_1 + \ldots + z_d)  \, +  \sum_{S \subseteq [d]} \Big{(} \sum_{\lambda \cap S \neq \emptyset} |N_{\lambda}| \Big{)} y_S \, - 1 \\[-0.5em]
 & \text{subject to } \mathbf{y} \in \mathbb{R}^{2^d - 1}_{\geq 0} , \mathbf{z} \in \mathbb{R}^{d}_{\geq 0}
  \, \,{\rm and} \,
z_i + \sum_{S \ni i} y_S \, \geq  1 \, {\rm for} \, \, i \in [d].
\end{align*}
It suffices to find a dual feasible point at which the dual objective equals the primal objective evaluated at $\mathbf{c}^{(r)}$.
We call a set $S \subseteq [d]$ \textit{saturated} if equality holds in \eqref{eq:joe-bound} for $\mathbf{c}^{(r)}$.  We define
\begin{align*}
& y_S^{(r)} = \begin{cases} 1 &\text{if } S \subseteq [d] \text{ is saturated and maximal such set} \\ 
0 & \text{otherwise}; \end{cases} \\[0.5em]
& z_i^{(r)} = \begin{cases} 
1 & \text{if } \nexists \, S \subseteq [d] \text{ s.t. } i \in S \text{ and } S \text{ is saturated}  \\
0 & \text{otherwise}.
\end{cases}
\end{align*}
The vector $(\mathbf{y}^{(r)}, \mathbf{z}^{(r)})$ is dual feasible.
   Further, we claim that there is a unique maximal saturated subset of $[d]$, possibly empty. 
    Suppose that $S$ and $T$ are saturated.  Then $\sum_{i \in T \setminus S} c_i^{(r)} + \sum_{i \in T \cap S} c_i^{(r)} = \sum_{\lambda \cap T \neq \emptyset} |N_{\lambda}|$ and $\sum_{i \in T \cap S} c_i^{(r)} \leq \sum_{\lambda \cap (T \cap S) \neq \emptyset} |N_{\lambda}|$ since $T$ is saturated and $\mathbf{c}^{(r)}$ is primal feasible.  Subtracting these, we find 
    \begin{align*}
    \sum_{i \in T \setminus S} c_i^{(r)} &\geq \sum_{\lambda \cap T \neq \emptyset} |N_{\lambda}| - \sum_{\lambda \cap (S \cap T) \neq \emptyset} |N_{\lambda}| \\
    &= \sum_{\lambda \cap T \neq \emptyset,  \lambda \cap (S \cap T) = \emptyset} |N_{\lambda}|.
    \end{align*} Adding $\sum_{i \in S} c_i^{(r)} = \sum_{\lambda \cap S \neq \emptyset} |N_{\lambda}|$ implies $$\sum_{i \in S \cup T} c_i^{(r)} \geq \sum_{\lambda \cap (S \cup T) \neq \emptyset} |N_{\lambda}|.$$ Hence $S \cup T$ is saturated by primal feasibility.  Thus there is a unique maximal saturated subset of $[d]$. 
It follows that the dual objective evaluated at $(\mathbf{y}^{(r)}, \mathbf{z}^{(r)})$ equals the primal objective evaluated at $\mathbf{c}^{(r)}$. This completes the proof.
\end{proof}

We conjecture that the integer linear program  (\ref{eq:ILP}) computes the correct dimension: 

\begin{conjecture}
If $d \geq 3$ then the bound for $\operatorname{dim}(\sigma_r(\mathcal{M}_{n,d}))$ in Theorem~\ref{thm:integerprogram} is tight.
\end{conjecture}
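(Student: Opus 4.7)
The plan is to exhibit an explicit point in parameter space at which the Jacobian of the parametrization (\ref{eq:paramm}) has rank equal to the optimum of the ILP (\ref{eq:ILP}). Since the proof of Theorem~\ref{thm:integerprogram} bounds ${\rm dim}(\sigma_r(\mathcal{M}_{n,d}))$ by the maximal Jacobian rank, producing a matching lower bound suffices. The natural scaffolding is the greedy construction of Theorem~\ref{thm:greedy}: at step $t$, adjoining the $t$-th mixture component should contribute exactly $\sum_i (c_i^{(t)} - c_i^{(t-1)})$ independent columns to the Jacobian.

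I would proceed by induction on $r$. The base case $r = 1$ is Theorem~\ref{thm:expected-dim}: whenever $nd \leq \binom{n+d-1}{d}$, no saturation constraint is active, the greedy algorithm returns $c_i^{(1)} = n$ for every $i$, and the ILP value $nd - 1$ agrees with $\dim \mathcal{M}_{n,d}$. For the inductive step, assume parameters $\mu^{(1)}, \ldots, \mu^{(r-1)}$ are given at which the Jacobian has rank $\sum_i c_i^{(r-1)}$, and adjoin a new component $\mu^{(r)}$ chosen generically. The column labeled by $\mu^{(r)}_{ki}$ is supported only on rows indexed by partitions $\lambda$ containing $i$, so the block-sparsity structure exploited in the upper-bound proof is preserved.

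The combinatorial heart of the argument is a Hall-type marriage argument reminiscent of the proof of Theorem~\ref{thm:generated-cubics}. For each subset $S \subseteq [d]$, the rows supported by partitions meeting $S$ span a space of dimension at most $\sum_{\lambda \cap S \neq \emptyset} |N_\lambda|$; the saturation conditions produced by the greedy algorithm encode exactly the Hall conditions on the bipartite incidence graph between candidate new columns and their supporting rows. When $S$ is unsaturated, columns from degrees $i \in S$ may be added freely; when $S$ is saturated, the counting is tight and further additions within $S$ are blocked. Choosing $\mu^{(r)}_{ki}$ generically (or concretely as distinct monomials in auxiliary variables producing a Vandermonde-like configuration) should realize the maximal matching and therefore the required rank increase.

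\textbf{The main obstacle} is ruling out coincidental linear dependencies between the newly introduced columns and the columns from previous mixture components. This requires a Zariski-openness argument: the locus where the Jacobian attains the conjectural rank is open, so one only needs a single witness. A promising strategy is a tropical or degeneration argument, in the spirit of Theorem~\ref{thm:tropical}, showing that the dimension is preserved under a monomial specialization whose initial Jacobian decouples into combinatorially tractable blocks matching the greedy increments. The hypothesis $d \geq 3$ is essential here: for $d = 2$ the classical Gaussian defectivities already contradict the bound, as witnessed by the pentad of Example~\ref{ex:pentadlimit} for $\sigma_2(\mathcal{M}_{5,(11)})$, so any proof must exploit genuinely higher-degree structure, for instance the additional freedom afforded by partitions whose reductions have $s \geq 2$ in Theorem~\ref{thm:dim-partitions}.
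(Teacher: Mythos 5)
This statement is stated as a \emph{conjecture} in the paper, and the paper offers no proof of it; the subsequent example and Theorem~\ref{thm:tropical} provide supporting evidence in special cases (the hypersimplex $\lambda=(1^d)$ regime) but nothing close to the full claim. There is therefore no ``paper's own proof'' to compare your proposal against.

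As a proof sketch, your plan is a reasonable and natural attack: show that the Jacobian rank upper bound from the proof of Theorem~\ref{thm:integerprogram} is achieved by induction on $r$, using the greedy increments from Theorem~\ref{thm:greedy} to predict how many new independent columns each mixture component should contribute, and verifying this via a Hall-type matching on the block-sparsity pattern. But as written this is not a proof, for reasons you largely acknowledge yourself. First, the inductive step is only described aspirationally (``should contribute,'' ``should realize''); the Hall argument that works in Theorem~\ref{thm:generated-cubics} lives in the combinatorics of monomial supports, whereas here you need genuine linear independence of Jacobian columns over $\mathbb{R}$, which requires an explicit witness point or a degeneration argument, not just a matching count. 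Second, the ``main obstacle'' you flag --- coincidental dependencies between old and new columns --- is exactly where all the work lies, and the tropical strategy of Theorem~\ref{thm:tropical} is tailored to a single toric variety (the hypersimplex), not to the aggregate $\mathcal{M}_{n,d}$ whose $A$-matrix mixes all partitions $\lambda \vdash d$; extending it would require new ideas. Third, your base case claim that ``no saturation constraint is active'' when $nd \leq \binom{n+d-1}{d}$ for $r=1$ needs verification: the singleton constraint for $S=\{d\}$ gives exactly $c_d \leq n$, which is the box bound, but you should check that intermediate subsets $S$ do not bind before invoking Theorem~\ref{thm:expected-dim}. Finally, your explanation of the $d\geq 3$ hypothesis is correct in spirit but misattributed: the pentad of Example~\ref{ex:pentadlimit} concerns $\sigma_2(\mathcal{M}_{5,(11)})$, while the ILP bounds $\dim\sigma_r(\mathcal{M}_{n,d})$; the defect does propagate via the join decomposition $\sigma_2(\mathcal{M}_{5,2})=\mathbb{P}^4\star\sigma_2(\mathcal{M}_{5,(11)})$, giving dimension $13$ versus the ILP value $14$, but that intermediate step must be spelled out. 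In summary, the architecture is sensible and matches how one would try to settle the conjecture, but the conjecture remains open and the proposal does not close it.
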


 Informally, the conjecture says that the secant variety has the maximal dimension possible given the sparsity pattern of its parameterization \eqref{eq:paramm}. This has been verified in many cases.

\begin{example} Let $n=4$, $d=12$. The inclusion
$\sigma_r(\mathcal{M}_{4,12})  \subset \mathbb{P}^{454}$
is strict for $r \leq 11$.
The dimensions are
 $47,91,135,175,215,255,291,$ $327,363,399,431$. This was
found correctly by  Theorem~\ref{thm:integerprogram}.
Compare this to the sequence
$\,47,91,135,179,223,267,311,355,399,443, 454$, which is
the upper bound  $\,{\rm min} \{44r + 3,454\}\,$ for ${\rm dim}\bigl( \sigma_r(\mathcal{M}_{4,12}) \bigr)$
given in Proposition \ref{prop:expdim}.
\end{example}

The question of finding the dimension is equally intriguing if we
replace the parameter $d$ by one specific partition $\lambda \vdash d$. 
Of particular interest is the partition $ \lambda = (1,1,1,\ldots,1) = (1^d)$. 
This toric variety has dimension $n-1$, and hence we have the trivial upper bound
\begin{equation}
 {\rm dim} \bigl( \sigma_r( \mathcal{M}_{n,(1^d)}) \bigr) \,\, 
\leq \,\, r(n-1) + r-1 \, = \,nr - 1. 
\end{equation}
Based on extensive computations, we conjecture that equality holds outside the matrix case:

\begin{conjecture}\label{conj:hypersimplex-dim}
Secant varieties of hypersimplices, other than the second hypersimplex, have the expected dimension.
In symbols, if $\, 3 \leq d \leq n-3\,$ then
$\,{\rm dim} \bigl( \sigma_r( \mathcal{M}_{n,(1^d)}) \bigr) = nr - 1 $.
\end{conjecture}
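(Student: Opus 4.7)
The plan is to apply Terracini's lemma and reduce Conjecture~\ref{conj:hypersimplex-dim} to a tropical rank computation. Since the parametrization (\ref{eq:paramm}) restricted to $\lambda = (1^d)$ has $nr$ parameters and lands in projective space, the bound $\dim \sigma_r(\mathcal{M}_{n,(1^d)}) \leq nr-1$ is automatic, so it suffices to exhibit generic values of the $\mu^{(j)}_{k1}$ at which the Jacobian of (\ref{eq:paramm}) has rank exactly $nr$.

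The Jacobian $J$ is the $rn \times \binom{n}{d}$ matrix with rows indexed by pairs $(j,k) \in [r] \times [n]$ and columns indexed by $d$-subsets $I \subseteq [n]$; its entry is $J_{(j,k), I} = \prod_{k' \in I \setminus \{k\}} \mu^{(j)}_{k'1}$ if $k \in I$ and $0$ otherwise. Following the tropical strategy of Theorem~\ref{thm:tropical}, I would specialize $\mu^{(j)}_{k1} = t^{w_{kj}}$ for carefully chosen integer weights $w_{kj}$, so that every nonzero entry of $J$ becomes a monomial in $t$. The rank of $J$ is then bounded below by the tropical rank of the exponent matrix: if some $rn \times rn$ submatrix of the tropicalization has a tropical permanent attained by a unique transversal, the corresponding minor of $J$ has a nonzero leading term in $t$, forcing $\rk(J) \geq rn$.

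The central combinatorial task is to construct a witness for this tropical rank. Concretely, one needs to choose $rn$ columns, an injection $(j,k) \mapsto I_{(j,k)}$ from rows to these columns with $k \in I_{(j,k)}$, and weights $w_{kj}$ such that among all bijections compatible with the sparsity pattern the chosen one uniquely minimizes $\sum_{(j,k)} \sum_{k' \in I_{(j,k)} \setminus \{k\}} w_{k'j}$. The hypothesis $3 \leq d \leq n-3$, together with the reduction $\mathcal{M}_{n,(1^d)} = \mathcal{M}_{n,(1^{n-d})}$ from Lemma~\ref{lem:partition-identification} (which lets one assume $d \leq \lfloor n/2 \rfloor$), provides enough room for a block-structured ansatz: weights of the form $w_{kj} = N^{j} a_k$ with $N \gg 0$ and generic integers $a_k$, together with columns $I_{(j,k)} = \{k\} \cup B_{j,k}$ for $(d-1)$-sets $B_{j,k} \subseteq [n] \setminus \{k\}$ varying in a controlled way with $j$.

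The hard part will be verifying uniqueness of the tropical minimum. A separation of scales $N \gg 0$ should rule out swaps between blocks indexed by different $j$, while swaps within a single $j$-block reduce to the $r=1$ case, in which $\mathcal{M}_{n,(1^d)}$ is $(n-1)$-dimensional by Remark~\ref{rmk:eulerian}. Engineering the $B_{j,k}$ so that both obstructions are simultaneously ruled out throughout the range $3 \leq d \leq n-3$ is where the argument will need the most care. The excluded case $d=2$ must be treated separately, since the pentads (\ref{eq:pentad}) and off-diagonal $3 \times 3$ minors produce extra syzygies that drop the rank of $J$; part of the content of the conjecture is precisely that no analogous obstruction survives for $d \geq 3$.
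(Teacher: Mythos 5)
This statement is a \emph{conjecture} in the paper, not a theorem: the authors state it based on computational evidence and do not prove it. The closest the paper comes is Theorem~\ref{thm:tropical}, which establishes the expected dimension only under the additional hypothesis~(\ref{eq:bound}), i.e.\ for $r$ bounded above roughly by $\binom{n}{d}$ divided by a quantity of order $(d(n-d))^2$; the conjecture would extend this to all $r$ in the full range where $nr \leq \binom{n}{d}$. So there is no proof in the paper to compare against, and your submission cannot be evaluated as a ``different route to the same proof.''

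Your strategy --- specialize $\mu^{(j)}_{k1}$ to monomials $t^{w_{kj}}$ and bound the Jacobian rank from below by exhibiting a tropical transversal with a unique minimizer --- is exactly the family of ideas behind the paper's Theorem~\ref{thm:tropical}, which packages the same lower bound via Draisma's Voronoi-cell criterion in \cite{Tropical}. However, what you have written is an outline, not a proof. You correctly identify the crux (``the central combinatorial task is to construct a witness for this tropical rank'') and then leave precisely that step unspecified: the ansatz $w_{kj} = N^j a_k$ with block-varying sets $B_{j,k}$ is a guess, and you explicitly concede that ``engineering the $B_{j,k}$ so that both obstructions are simultaneously ruled out throughout the range $3 \leq d \leq n-3$ is where the argument will need the most care.'' That unresolved construction \emph{is} the conjecture; without it there is no proof.

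Two more specific cautions. First, separation of scales $N \gg 0$ rules out cross-block swaps, but the within-block analysis does not simply ``reduce to the $r=1$ case'': the rank-$n$ contribution of each single block is indeed Remark~\ref{rmk:eulerian} in disguise, but the $r$ blocks must claim \emph{disjoint} columns of a $\binom{n}{d}$-column matrix, and as $nr$ approaches $\binom{n}{d}$ the later blocks have almost no columns to choose from. This is precisely the regime Theorem~\ref{thm:tropical} cannot reach, and where the conjecture is genuinely open. Second, the exclusion of $d=2$ is a matter of parameter count, not of the existence of the pentad: for a symmetric $n\times n$ matrix the $\binom{n}{2}$ off-diagonal entries are too few relative to $nr$ once $r$ is moderate, and the corresponding phenomenon simply does not occur for $d \geq 3$; an actual proof must use that extra numerical slack quantitatively rather than just invoking the absence of the $d=2$ obstruction.
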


Theorem~5.1 in \cite{ZK} implies $\sigma_r(\mathcal{M}_{n, (1^d)})$ is strongly identifiable for $r \lesssim n^{\lfloor (d-1)/2 \rfloor}$.  
In particular, the secant variety has the expected dimension.  
Our next result is that the secant variety also has the expected dimension if $r \lesssim n^{d-2}$.  The proof relies on tropical geometry \nolinebreak \cite{Tropical}.

\begin{theorem}\label{thm:tropical}
The secant variety of the hypersimplex has the expected dimension if
\begin{equation} \label{eq:bound}
\left( 1 \, + \, d (n-d) \, + \, \binom{d}{2} \binom{n-d}{2} \right) r \quad \leq \quad \binom{n}{d}.
\end{equation}
\end{theorem}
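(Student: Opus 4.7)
The plan is to establish the lower bound matching the trivial upper bound $\dim\sigma_r(\mathcal{M}_{n,(1^d)})\le nr-1$, via a tropical analysis of the Jacobian of the secant parametrization. By Terracini's lemma, it suffices to show that the $\binom{n}{d}\times nr$ Jacobian $J$ of the secant map \eqref{eq:paramm} specialized to $\lambda=(1^d)$, with entries
\[
J_{S,(j,k)}\,=\,\mathbf{1}[k\in S]\prod_{\ell\in S\setminus\{k\}}\mu^{(j)}_\ell,
\]
has generic column rank $nr$. Every nonzero entry is a monomial in the $\mu^{(j)}_k$, so assigning valuations $w^{(j)}_k$ to the parameters gives $J_{S,(j,k)}$ the valuation $\sum_{\ell\in S\setminus\{k\}}w^{(j)}_\ell$. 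It is then enough to exhibit an $nr\times nr$ submatrix of $J$ whose tropical (min-plus) determinant is attained by a \emph{unique} permutation at some choice of valuations: a unique tropical witness guarantees that the classical determinant, viewed as a polynomial in the $\mu^{(j)}_k$, has a non-cancelling leading monomial and hence does not vanish identically.

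The submatrix is built by choosing a bijection $(j,k)\mapsto S_{j,k}$ from $[r]\times[n]$ onto an $nr$-subset of $\binom{[n]}{d}$ with $k\in S_{j,k}$. Here the quantity $B:=1+d(n-d)+\binom{d}{2}\binom{n-d}{2}$ is recognized as the size of a closed ball of radius $2$ in the Johnson graph $J(n,d)$, and hypothesis \eqref{eq:bound} is exactly the packing inequality $rB\le\binom{n}{d}$. I would pick base subsets $T_1,\ldots,T_r\in\binom{[n]}{d}$ (greedily, or via a Hall-type bipartite matching argument using the packing inequality), and for each $j$ pick $n$ distinct subsets $S_{j,1},\ldots,S_{j,n}\in B(T_j,2)$ with $k\in S_{j,k}$, all $nr$ of them globally distinct. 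Radius~$2$ is the natural local window, since one can already recover, from $\{m_S\}_{S\in B(T,2)}$ at a single point, the full tangent space to $\mathcal{M}_{n,(1^d)}$ at that point.

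Uniqueness of the tropical optimum is engineered by placing the valuations on well-separated scales, e.g.\ $w^{(j)}_k=M^{\,j}a_{j,k}+b_{j,k}$ with $M$ large and $a_{j,k},b_{j,k}$ generic rationals. Because every nonzero entry in column $(j,k)$ carries a valuation built exclusively from $w^{(j)}_\ell$'s, the total cost of any permutation splits as $\sum_j M^{\,j}\Phi_j(\pi)+\text{lower order in }M$, where $\Phi_j(\pi)$ depends only on how the $n$ block-$j$ columns are assigned to rows. The rows $S_{j,k}$ are then chosen so that under generic $a_{j,k}$ the identity matching uniquely minimizes $\Phi_j$ over valid assignments of the block-$j$ columns into the globally chosen $nr$ rows; the lower-order terms $b_{j,k}$ break any remaining cross-block ties.

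The main obstacle is the cross-block part of the uniqueness: a permutation may reroute a block-$j$ column into a row nominally assigned to block $j'\neq j$ whenever that row still contains the requisite index $k$, and one must argue that such reroutings strictly increase the total cost. The localization $S_{j,k}\in B(T_j,2)$ is what does the work, since balls of radius at most~$2$ impose enough combinatorial rigidity on how indices $k$ can appear across different blocks while still being permissive enough for the packing $rB\le\binom{n}{d}$ to hold. Quantifying this rigidity is the technical core of the proof, and it is precisely where Conjecture~\ref{conj:hypersimplex-dim} would require a finer combinatorial witness than the radius-$2$ ball.
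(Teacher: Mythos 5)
You have correctly identified the combinatorial backbone of the bound: $B = 1 + d(n-d) + \binom{d}{2}\binom{n-d}{2}$ is the size of a closed ball of radius $2$ in the Johnson graph $J(n,d)$, and \eqref{eq:bound} is a packing inequality for $r$ such balls among the $\binom{n}{d}$ vertices of $\Delta(n,d)$. However, your attempt to prove the rank claim by exhibiting an explicit $nr\times nr$ submatrix of the Jacobian and engineering a unique tropical witness has a genuine gap, and you say so yourself: the cross-block uniqueness of the tropical optimum (the assertion that rerouting a block-$j$ column into a row originally slotted for block $j'$ strictly increases the cost) is claimed but never established. Scale separation via $M^j$ alone does not settle it, because the dominant change under a cross-block swap occurs in the \emph{higher} block's scale and depends on which specific rows are in play; this is precisely the combinatorial rigidity you defer, so the proposal is incomplete rather than a finished proof.

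The paper sidesteps this entirely by invoking Draisma's tropical-secant machinery directly, namely \cite[Lemma~3.8]{Tropical} and \cite[Corollary~3.2]{Tropical}: it suffices to produce points $v_1,\ldots,v_r\in\mathbb{R}^n$ so that each Voronoi cell $\mathrm{Vor}_i$ with respect to the lattice points $S=\Delta(n,d)\cap\mathbb{Z}^n$ affinely spans a hyperplane. Under \eqref{eq:bound}, a greedy choice yields $v_1,\ldots,v_r\in S$ that are pairwise at Johnson distance at least $3$; then $\mathrm{Vor}_i$ contains the entire radius-$1$ ball around $v_i$ (the vertex and its $d(n-d)$ neighbors), which already spans $\Delta(n,d)$ affinely. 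Note this also corrects a misplacement in your sketch: the radius-$2$ ball is what gets packed (to guarantee separation by distance $\geq 3$), while only radius $1$ is needed for the affine span; the tangent space does not require the distance-$2$ shell. Draisma's criterion is exactly the repackaging of the scale-separated valuation argument you are trying to rebuild from scratch, and using it avoids having to choose a submatrix and verify a unique minimizing permutation at all.
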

\begin{proof}
Assume \eqref{eq:bound} holds.  Let $S = \Delta(n,d) \cap \mathbb{Z}^n$. 
From \cite[Lemma~3.8]{Tropical} and \cite[Corollary~3.2]{Tropical}, it suffices to show that there exist points $v_1, \ldots, v_r \in \mathbb{R}^n$ such that each of the Voronoi cells
$$
\textup{Vor}_i(v) := \{ \alpha \in S : \| \alpha - v_i \|_2 < \| \alpha - v_j \|_2 \textup{ for all } i \neq j \}
$$
spans an affine space of dimension $n-1$ inside $\mathbb{R}^n$.  
To argue this, choose $v_1 \in S$ arbitrarily and set $N_1 = \{ \alpha \in S : \| \alpha - v_1 \|_2 \leq 2 \}$.  Next choose $v_2 \in S \backslash N_1$ arbitarily and set $N_2 = \{ \alpha \in S : \| \alpha - v_2 \|_2 \leq 2 \}$.  Next choose $v_3 \in S \setminus \left(N_1 \cup N_2\right)$ arbitrarily and define $N_3$.  We continue until $S \setminus \left(N_1 \cup N_2 \cup \ldots \right) = \emptyset$. 
Note that the parenthesized sum on the left-hand side of \eqref{eq:bound} equals the size of each set $N_i$, while the right-hand side gives the size of $S$.  
Thus, \eqref{eq:bound} guarantees that we choose at least $r$ points in $S$.  Furthermore, these points
differ pairwise in at least $3$ coordinates by construction.
So, the $i$th Voronoi cell contains all elements of $S$ that differ from $v_i$ in at most one coordinate.  That is, it contains $v_i$ and all vertices in the hypersimplex adjacent to $v_i$.  Hence $\textup{Vor}_i(v)$ has the same affine span as $\Delta(n,d)$.    
\end{proof}

\section{Implicitization}
\label{sec6}

We verified the dimensions in Section \ref{sec5} with numerical methods for
fairly large instances, by computing the rank of the
Jacobian matrix of the parametrization (\ref{eq:param}).
For this we employed {\tt Maple},  {\tt Julia},  and the numerical {\tt Macaulay2} package in \cite{CK}.
We found it much more difficult to solve the implicitization problem,
that is, to compute the defining polynomials of our moment 
varieties.
The pentad (\ref{eq:pentad}) suggests that such polynomials can be quite \nolinebreak interesting.
In this section we offer more examples of equations, along with the degrees for our varieties. The code used in our computations is available on MathRepo.\footnote{\url{https://mathrepo.mis.mpg.de/MomentVarietiesForMixturesOfProducts}}

\begin{remark} \label{rem:idrepara}
It is preferable to work with birational parameterizations when numerically computing the degree of a variety \cite{BTW}.
However the map (\ref{eq:param}) is $d$-to-$1$: if $\omega$ is a primitive $d$th root of unity then
we can replace $\mu_{ki}$ by $\mu_{ki} \,\omega^i$ without changing 
 $m_{i_1 i_2 \cdots i_n}$. 
This implies that the  map (\ref{eq:paramm}) has fibers of size at least $r! \, d^r$.
We set $\mu_{2,1} = 1$ and let $\mu_{1,0}$ be an unknown to make (\ref{eq:param}) into a birational parameterization of $\mathcal{M}_{n,d}$.
Likewise, we turn (\ref{eq:paramm})
into a parametrization of $\sigma_r(\mathcal{M}_{n,d})$ that is expected to be $r!$-to-$1$, by
setting $\mu^{(j)}_{2,1}$ to $1$ and
using unknowns for $ \mu^{(j)}_{1,0}$.
\end{remark}

Let us now present a case study for implicitization, focused on the
hypersimplex $\Delta(6,3)$.

\begin{example}[$n=6,d=3$] 
The $5$-dimensional toric variety $\mathcal{M}_{6,(111)}$ lives in
$\mathbb{P}^{19}$, and it has degree $A(6,3)=66$ by Remark \ref{rmk:eulerian}.
Its  toric ideal is minimally generated by $69$ binomial quadrics.
These quadrics are the  $2 \times 2$ minors 
that are visible 
(i.e.~do not involve any stars)
in the following masked Hankel matrix:
\setcounter{MaxMatrixCols}{20}
\begin{align*}
\begin{small}
\hspace{-4em}\left[\begin{matrix}
   \star & \star & \star & \star & \star & \! m_{123} & \! m_{124} & \! m_{125} \\
    \star & \! m_{123} & \! m_{124} & \! m_{125} & \! m_{126} & \star & \star & \star \\
    \! m_{123} & \star & \! m_{134} & \! m_{135} & \! m_{136} & \star & \! m_{234} & \! m_{235} \\
    \! m_{124} & \! m_{134} & \star & \! m_{145} & \! m_{146} & \! m_{234} & \star & \! m_{245}\\
    \! m_{125} & \! m_{135} & \! m_{145} & \star & \! m_{156} & \! m_{235} & \! m_{245} & \star \\
    \! m_{126} & \! m_{136} & \! m_{146} & \! m_{156} & \star & \! m_{236} & \! m_{246} & \! m_{256} 
\end{matrix}\right.\end{small}\\ 
\end{align*}
\vspace{-2.5em}
\begin{align*}
&\qquad\qquad\qquad
\begin{small}\left.\begin{matrix}
   m_{126} &  \!  m_{134} & \! m_{135} & \! m_{136} & \! m_{145} & \! m_{146} & \! m_{156} \\
   \star & \! m_{234} & \! m_{235} & \! m_{236} & \! m_{245} & \! m_{246} & \! m_{256} \\
   m_{236} & \star & \star & \star & \! m_{345} & \! m_{346} & \! m_{356} \\
   m_{246} & \! \star & \! m_{345} & \! m_{346} & \star & \star & \! m_{456} \\
    m_{256} & \! m_{345} & \star & \! m_{356} & \star & \! m_{456} & \star \\
   \star & \! m_{346} & \! m_{356} & \star & \! m_{456} & \star & \star 
\end{matrix}\right].
\end{small}
\end{align*}

The rows are labeled by  $i \in \{1,2,\ldots,6\}$,
and the  columns by pairs $\{j,k\}$ of such indices.
The entry is $m_{ijk}$ if these are disjoint, and it is $\star$ otherwise.
Here we use
$\,m_{123} = m_{111000}, \,m_{124} = m_{110100}, \ldots, \, m_{456} = m_{000111}$.
Our toric ideal is generated by all $2 \times 2$ minors without $\star$.

The $6 \times 15$ matrix has twenty $3 \times 3$ minors without $\star$,
and these vanish on $\sigma_2( \mathcal{M}_{6,(111)})$.
In addition to these cubics, the ideal contains $12$ pentads (\ref{eq:pentad}),
one for each facet $\Delta(5,2)$ of the hypersimplex $\Delta(6,3)$.
Our ideal for $r=2$ is generated by these $20$ cubics and $12$ quintics.
Numerical degree computations using Remark  \ref{rem:idrepara}
with {\tt HomotopyContinuation.jl} \cite{BreTim}  reveal 
\begin{equation}
\label{eq:numdegree}
 {\rm deg}(\sigma_2 (\mathcal{M}_{6,(111)})) =  465\,
 {\rm and} \,
  {\rm deg}(\sigma_3 (\mathcal{M}_{6,(111)})) =  80.
\end{equation}
Symbolic computations for
 $r=3$ are challenging. Our secant variety has
codimension $2$ in $\mathbb{P}^{19}$. There are no quadrics or cubics vanishing on
 $\sigma_3 (\mathcal{M}_{6,(111)})$, but 
 there is a unique quartic:
\begin{equation}
\label{eq:quadset} 
 \begin{small} \!\!\!\!\!\!\begin{matrix}
     \,\,m_{123} m_{145} m_{246} m_{356} {-} m_{123} m_{145} m_{256} m_{346}
  {-} m_{123} m_{146} m_{245} m_{356} \\ {+} m_{123} m_{146} m_{256} m_{345}
  {+} m_{123} m_{156} m_{245} m_{346}  {-} m_{123} m_{156} m_{246} m_{345} \\
  {-} m_{124} m_{135} m_{236} m_{456} {+} m_{124} m_{135} m_{256} m_{346}
  {+} m_{124} m_{136} m_{235} m_{456} \\ {-} m_{124} m_{136} m_{256} m_{345} 
  {-} m_{124} m_{156} m_{235} m_{346} {+} m_{124} m_{156} m_{236} m_{345} \\
  {+} m_{125} m_{134} m_{236} m_{456} {-} m_{125} m_{134} m_{246} m_{356}
  {-} m_{125} m_{136} m_{234} m_{456} \\ {+} m_{125} m_{136} m_{246} m_{345}
  {+} m_{125} m_{146} m_{234} m_{356} {-} m_{125} m_{146} m_{236} m_{345} \\
  {-} m_{126} m_{134} m_{235} m_{456} {+} m_{126} m_{134} m_{245} m_{356} 
  {+} m_{126} m_{135} m_{234} m_{456} \\ {-} m_{126} m_{135} m_{245} m_{346}
  {-} m_{126} m_{145} m_{234} m_{356} {+} m_{126} m_{145} m_{235} m_{346} \\
  {+} m_{134} m_{156} m_{235} m_{246} {-} m_{134} m_{156} m_{236} m_{245}
  {-} m_{135} m_{146} m_{234} m_{256} \\ {+} m_{135} m_{146} m_{236} m_{245}
  {+} m_{136} m_{145} m_{234} m_{256} {-} m_{136} m_{145} m_{235} m_{246}.
\end{matrix} \end{small}
\end{equation}
Note the beautiful combinatorics in this polynomial: the role of the $5$-cycle for the
pentad is now played by the {\em quadrilateral set}, i.e.~the six intersection points
of four lines in the plane.
\end{example}

We conclude this article with the smallest non-trivial
secant varieties. Here ``non-trivial'' means $r \geq 2$,
the variety does not fill its ambient projective space,
and the ambient dimension is as small as possible.
The next two results feature all 
cases where $\binom{n+d-1}{d} \leq 50$.
The list consists of $(r,n,d) = (2,5,3)$ 
from Example \ref{ex:M53toric} 
and  $(r,n,d)=(2,4,4)$ from
Example~\ref{ex:M44toric}.
We state these as propositions because they 
represent case studies that are
of independent interest for 
experimental mathematics, especially in
the ubiquitous setting of tensor decompositions.

\begin{proposition} \label{prop:M53}
The secant variety $\sigma_2(\mathcal{M}_{5,3})$ has dimension $24$ and degree $3225$
in $\mathbb{P}^{34}$. Its prime ideal is generated by $313$ polynomials,
namely $10$ cubics, $283$ quintics, $10$ sextics and $10$ septics.
These ideal generators are obtained by elimination from
the ideal of $3 \times 3$ minors of the $5 \times 15$~matrix 
\setcounter{MaxMatrixCols}{20}
\begin{align}\label{eq:hankel53}
\begin{small}
\hspace{0em}\left[\begin{matrix}
    a_{23} &  a_{24} &  a_{25} &  a_{34} &  a_{35} &  a_{45} & \star & \star  &\star &  \star  \\
    a_{13} &  a_{14} & a_{15} & \star & \star & \star &  a_{34} & a_{35} & a_{45}&  \star\\
    a_{12} & \star & \star & a_{14} & a_{15} & \star & a_{24} & a_{25} & \star & a_{45} \\
    \star & a_{12} & \star & a_{13} & \star & a_{15} & a_{23} & \star & a_{25} & a_{35}\\
    \star  & \star & a_{12} & \star & a_{13} & a_{14} & \star & a_{23} & a_{24} & a_{34} 
\end{matrix}\right.\end{small}
\end{align}
\begin{align*}
&\qquad\qquad
\qquad\qquad\qquad\quad
\begin{small}\left.\begin{matrix}
\star &  b_{21} &  b_{31} &  b_{41} &  b_{51} \\
b_{12} &  \star &  b_{32} &  b_{42} &  b_{52} \\
 b_{13} &  b_{23} &  \star &  b_{43} &  b_{53} \\
 b_{14} & b_{24} & b_{34} & \star & b_{54} \\
b_{15} & b_{25} & b_{35} & b_{45} & \star
\end{matrix}\right].
\end{small}
\end{align*}
\end{proposition}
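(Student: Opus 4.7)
The plan is to verify three separate claims: the dimension equals $24$, the variety has degree $3225$, and the prime ideal has the stated generator profile. The dimension is already established in Example~\ref{ex:M53toric}: since $\mathcal{M}_{5,3}=\mathbb{P}^4\star\widetilde{\mathcal{M}}_{5,3}$ is a cone, so is $\sigma_2(\mathcal{M}_{5,3})=\mathbb{P}^4\star\sigma_2(\widetilde{\mathcal{M}}_{5,3})$, and the expected count $2\cdot 14+1-5=24$ (correcting for the shared apex $\mathbb{P}^4$) is met. This cone structure also shows that the defining ideal lives naturally in the subring generated by the $30$ moment coordinates of types $(1,1,1)$ and $(2,1)$, with the five pure moments $m_{3e_i}$ contributing only trivially.

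For the matrix in \eqref{eq:hankel53}, I would identify $a_{jk}$ with the type-$(1,1,1)$ moment $m_{[5]\setminus\{j,k\}}$ and $b_{ij}$ with the type-$(2,1)$ moment $m_{e_i+2e_j}$, while treating the $25$ starred entries as independent auxiliary unknowns. The structural observation to verify is that for any $m\in\sigma_2(\mathcal{M}_{5,3})$ with mixture parameters $\mu^{(r)}_{k,i}$, the matrix admits a rank-two completion $M=UW$: take $U$ to be the $5\times 2$ matrix with rows $(\mu^{(1)}_{k,1},\mu^{(2)}_{k,1})$, let the column of $W$ indexed by a triple $T$ be $(\mu^{(1)}_{s,1}\mu^{(1)}_{t,1},\mu^{(2)}_{s,1}\mu^{(2)}_{t,1})^\top$ with $\{s,t\}=[5]\setminus T$, and let the column of $W$ indexed by $j\in[5]$ be $(\mu^{(1)}_{j,2},\mu^{(2)}_{j,2})^\top$. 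A direct expansion shows that $UW$ agrees with $a_{jk}$ in the a-block non-star positions and with $b_{ij}$ in the b-block non-star positions. Hence every $3\times 3$ minor vanishes after substituting the corresponding entries of $UW$ for the stars, and so eliminating the stars from the ideal of $3\times 3$ minors produces an ideal contained in the prime ideal of $\sigma_2(\widetilde{\mathcal{M}}_{5,3})$.

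The list of $313$ generators would then be produced by the corresponding symbolic elimination in Macaulay2. The ten cubics are exactly the star-free $3\times 3$ minors, one for each $3$-subset $R\subset[5]$: the unique such minor uses the a-column indexed by $R$ together with the two b-columns indexed by $j\in[5]\setminus R$, yielding a cubic with one $a$-factor and two $b$-factors in each term. The remaining $283$ quintics, $10$ sextics and $10$ septics arise as resultants that eliminate stars from $3\times 3$ minors which do involve stars. To certify equality of the elimination ideal with the full prime ideal of $\sigma_2(\widetilde{\mathcal{M}}_{5,3})$, I would compute the Hilbert polynomial of the output in Macaulay2, match it against the dimension $24$ and the independently computed degree $3225$, and verify primality.

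Finally, the degree $3225$ would be certified numerically via monodromy in HomotopyContinuation.jl \cite{BreTim}, analogous to the degrees reported in \eqref{eq:numdegree}. By Remark~\ref{rem:idrepara}, setting $\mu^{(j)}_{2,1}=1$ makes \eqref{eq:paramm} generically $2$-to-one, so the monodromy count divided by $2$ returns the geometric degree. The main obstacle is the scale of the Gr\"obner basis and elimination computation in the resulting $55$-variable polynomial ring; exploiting the $S_5$-symmetry of the matrix and performing orbit-wise reductions is likely essential to make the symbolic computation feasible, and primality of the output requires its own dedicated certificate.
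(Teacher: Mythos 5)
Your proposal follows essentially the same route as the paper: identify the visible entries with the $\lambda=(111)$ and $\lambda=(21)$ moments, eliminate the stars from the ideal of $3\times3$ minors, and certify the result by dimension, degree, and primality checks in \texttt{Macaulay2}, with the degree obtained numerically via \texttt{HomotopyContinuation.jl}. Your added explanation via the explicit rank-two completion $M=UW$ is a worthwhile observation not spelled out in the paper's proof, and your count of the ten star-free $3\times 3$ minors (one per triple $T\subset[5]$, with columns $T,j_1,j_2$ and rows $T$) is correct. One notational slip to fix: the paper sets $b_{ij}=m_{2e_i+e_j}$ (e.g.\ $b_{12}=m_{21000}$), not $m_{e_i+2e_j}$ as you write; your factorization $(UW)_{i,j}=\mu^{(1)}_{i,1}\mu^{(1)}_{j,2}+\mu^{(2)}_{i,1}\mu^{(2)}_{j,2}$ does reproduce the matrix entry $b_{ji}$ correctly once the paper's convention is used, so only the stated identification needs correcting.
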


Proposition \ref{prop:M53} is
important in that it
displays
a general technique of obtaining equations for
varieties of low rank 
structured symmetric tensors
from masked Hankel matrices.

\begin{proof}[Notation and Proof]
The visible
entries in the masked matrix (\ref{eq:hankel53}) are $30$ of the $35$ moments $m_{i_1i_2i_3i_4i_5}$.
Here, the $10$ moments for $\lambda = (111) $ are denoted
$a_{12} = m_{00111},
 a_{13} = m_{01011},
\ldots
a_{35} = m_{11010}, 
a_{45} = m_{11100}$, and
the $20$ moments for $\lambda = (12)$ are
 $\,
 b_{12} = m_{21000}, b_{13} = m_{20100},
\ldots, b_{21} = m_{12000}, \ldots,
 b_{53} = m_{00102}, b_{54} = m_{00012} $.
 The $25$ stars are distinct new unknowns, and these are being eliminated.
 The matrix contains ten $3 \times 3$-submatrices with no stars.
 Their determinants are the ten cubics
 mentioned
 in Proposition   \ref{prop:M53}.
 
 The ideal of $\sigma_2(\mathcal{M}_{5,3})$
 is homogeneous in the bigrading 
 given by $a$ and $b$.
   Among the generators, we find
    $1, 55, 110,$ $ 90, 27$ quintics of bidegrees $(5,0),(3,2),(2,3),(1,4),(0,5)$.
The quintic of bidegree $(5,0)$ is the pentad of the symmetric $5 \times 5$-matrix $(a_{ij})$.
One of the quintics of bidegree $(2,3)$~is
$$ \begin{small} \begin{matrix}
a_{13} a_{45} b_{25} b_{41} b_{53}{-}a_{13} a_{45}  b_{25} b_{43} b_{51}
{-}a_{14} a_{34} b_{21} b_{43} b_{54}\\
\!\!\!{+}a_{14} a_{34} b_{23} b_{41} b_{54} 
{-}a_{14} a_{35} b_{23} b_{45} b_{51}{+}a_{14} a_{35} b_{25} b_{43} b_{51}  
\\
\!\!\!+a_{14} a_{45} b_{24} b_{45} b_{51}{-}a_{14} a_{45} b_{25} b_{41} b_{54}
{+}a_{15} a_{34} b_{21} b_{45} b_{53}\\
\!\!\!{-}a_{15} a_{34} b_{25} b_{41} b_{53}
{-}a_{34} a_{45}  b_{24} b_{45} b_{53}{+}a_{34} a_{45} b_{25} b_{43} b_{54}.
\end{matrix} \end{small}
$$
The ten sextics have bidegrees $(4,2)$ and $(0,6)$, five each.
All ten septics have bidegree $(3,4)$.
 The $27+5$ generators of bidegrees $(0,5)$ and $(0,6)$
 generate the prime ideal of $\sigma_2(\mathcal{M}_{5,(21)})$.
 They arise from the $5 \times 5$ matrix $(b_{ij})$
   by eliminating the diagonal.
 The degree $3225$ was first found
  numerically, and later confirmed
 symbolically by {\tt Macaulay2}.
 \end{proof}

Our final result concerns
tensors of format
$4 \times 4 \times 4 \times 4$.

\begin{proposition} \label{prop:M44}
The secant variety $\sigma_2(\mathcal{M}_{4,4})$ has dimension $27$ and degree $8650$
in $\mathbb{P}^{34}$. Its prime ideal  has only three minimal generators in degrees at most six.
These are  the cubics 

\begin{equation*}
 {\rm det}
\begin{bmatrix}
     m_{2200} & m_{2110} & m_{2020} \\
     m_{1201} & m_{1111} & m_{1021} \\ 
     m_{0202} & m_{0112} & m_{0022} \\ 
\end{bmatrix}\! , \,\,
{\rm det}
\begin{bmatrix}
     m_{2200} & m_{2101} & m_{2002} \\ 
     m_{1210} & m_{1111} & m_{1012} \\ 
     m_{0220} & m_{0121} & m_{0022} \\ 
\end{bmatrix} \! ,\,\,
\end{equation*}
\begin{equation}\label{eq:threecubics}
{\rm det}
\begin{bmatrix}
     m_{2020} & m_{2011} & m_{2002} \\ 
     m_{1120} & m_{1111} & m_{1102} \\ 
     m_{0220} & m_{0211} & m_{0202}
\end{bmatrix} \! .
\end{equation}
\end{proposition}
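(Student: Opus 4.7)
The plan is to prove the dimension, the vanishing of the three cubics, and the remaining assertions (no other low-degree generators, and $\deg = 8650$) by separate means. For the dimension, note that $\mathcal{M}_{4,4}$ is a cone over $\mathcal{M}_{4,(4)} = \mathbb{P}^3$, so by Proposition \ref{prop:expdim} we have
\[
  \dim\bigl(\sigma_2(\mathcal{M}_{4,4})\bigr) \,\leq\, rnd - rn + n - 1 \,=\, 2\cdot 4\cdot 4 - 2\cdot 4 + 4 - 1 \,=\, 27,
\]
which is strictly less than $\binom{7}{4}-1=34$. To show the bound is achieved, I would compute the rank of the $35 \times 32$ Jacobian of the parametrization (\ref{eq:paramm}) at a single random rational point; a full-rank $28 \times 28$ minor there certifies $\dim = 27$, in line with our numerical checks (Section \ref{sec6}).

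Second, I would exhibit a conceptual reason why each cubic in (\ref{eq:threecubics}) vanishes on $\sigma_2(\mathcal{M}_{4,4})$. The three cubics correspond bijectively to the three perfect matchings of $\{1,2,3,4\}$ into two pairs, say $\{\{a,b\},\{c,d\}\}$. For a given matching, form the $3\times 3$ matrix whose rows are indexed by the triples $(2,0),(1,1),(0,2)$ in positions $(a,b)$ and whose columns are indexed by the same triples in positions $(c,d)$. Under the rank-one parametrization (\ref{eq:param}), the $(r,c)$ entry factors as
\[
  \bigl(\mu_{a,i_a^{(r)}}\mu_{b,i_b^{(r)}}\bigr)\cdot \bigl(\mu_{c,i_c^{(c)}}\mu_{d,i_d^{(c)}}\bigr),
\]
so the matrix has rank $1$ on $\mathcal{M}_{4,4}$. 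Consequently on $\sigma_2(\mathcal{M}_{4,4})$ it has rank at most $2$, whence every $3 \times 3$ minor lies in the ideal. Checking the three matrices in (\ref{eq:threecubics}) against this template (for the matchings $\{13,24\}$, $\{12,34\}$, $\{14,23\}$) finishes this step.

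Third, for the statements that \emph{only} these three cubics appear among minimal generators of degree $\leq 6$, and that the prime ideal defines a variety of degree $8650$, I would turn to computer algebra. Using the birational reparametrization from Remark \ref{rem:idrepara}, a monodromy run in \texttt{HomotopyContinuation.jl} computes $\deg(\sigma_2(\mathcal{M}_{4,4}))=8650$ by sampling fibers of a generic linear projection to $\mathbb{P}^{27}$. For the generator count, I would compute in \texttt{Macaulay2} the graded Betti numbers of the elimination ideal in each bidegree $\leq 6$, or equivalently check that the ideal $\langle \text{three cubics}\rangle$ agrees with the vanishing ideal of $\sigma_2(\mathcal{M}_{4,4})$ in all degrees $\leq 6$ by comparing Hilbert functions modulo a random linear slice.

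The main obstacle will be the last step: a direct symbolic implicitization of $\sigma_2(\mathcal{M}_{4,4}) \subset \mathbb{P}^{34}$ is well beyond the reach of Gröbner-basis elimination. I therefore expect to verify the generator claim only up to the stated degree bound, using Hilbert-function matching on a random linear slice to certify the absence of additional quartic, quintic, and sextic generators, rather than computing the full ideal.
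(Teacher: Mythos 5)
Your plan matches the paper's computational strategy for the dimension and degree claims, and the conceptual flattening argument you give for the three cubics is actually a nice improvement: the paper only says ``by direct computation,'' whereas you explain that each cubic is a $3\times 3$ minor of a matrix that is rank~$1$ on $\mathcal{M}_{4,4}$ because its $(r,c)$ entry factors as (a monomial in $\mu_{a,\cdot},\mu_{b,\cdot}$)\,$\times$\,(a monomial in $\mu_{c,\cdot},\mu_{d,\cdot}$) for the three perfect matchings $\{a,b\}\mid\{c,d\}$ of $\{1,2,3,4\}$. That is exactly the masked-Hankel mechanism the paper uses in Proposition~\ref{prop:M53} but never spells out for $\mathcal{M}_{4,4}$, so this part is correct and genuinely adds value.

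The gap is in your last step. To certify that the prime ideal agrees with $\langle\text{three cubics}\rangle$ in degrees $\leq 6$, you need the dimension of $(I_X)_\delta$ for $\delta=4,5,6$, obtained by interpolation from sample points. The naive linear system at $\delta=6$ lives in $\binom{40}{6}\approx 3.8\cdot 10^6$ monomials and is out of reach. The paper's key device, which your proposal omits, is the $A$-grading: since $\sigma_2(\mathcal{M}_{4,4})$ is invariant under the torus $(\mathbb{C}^\times)^{16}$ acting on the parameters $\mu_{ki}^{(j)}$ (independently of $j$), the vanishing ideal is multihomogeneous for the grading by the columns of the toric $A$-matrix, so the interpolation decomposes into many small blocks indexed by $A$-degree, each of which is tractable. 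Your proposed substitute, ``Hilbert-function matching on a random linear slice,'' does not obviously work: recovering the Hilbert function of $R/I_X$ from that of a codimension-$27$ linear section requires the slicing linear forms to be a regular sequence on $R/I_X$, which holds for generic slices only when $R/I_X$ is Cohen--Macaulay, a property you have not established (and should not need). Without either the CM hypothesis or the $A$-degree decomposition, the absence-of-generators claim in degrees $4,5,6$ is not certified. Replacing the slicing idea with the per-$A$-degree interpolation, as in the paper, closes the gap.
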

Proposition \ref{prop:M44} is proved by direct computation.
 The degree $8650$ was found with {\tt HomotopyContinuation.jl} using the method in \cite{BTW}.
The absence of minimal generators in degrees $4,5,6$ was verified by solving the
linear equations for each  $A$-degree in that range. Each solution was found to be
in the ideal (\ref{eq:threecubics}).  At present we know of no ideal generators for 
$\sigma_2(\mathcal{M}_{4,4})$ that involve the 
moments $m_{3100},m_{1300}, \ldots,m_{0013}$.
What is the smallest degree in which 
we can find such generators? \\

\bigskip

\noindent\textbf{Acknowledgements.} 
The authors are grateful to Jan Draisma, Marc H{\"a}rk{\"o}nen, Simon Telen, and Yifan Zhang for helpful conversations.

\bigskip
\bigskip

\bigskip
\footnotesize
\noindent {\bf Authors' addresses:}

\smallskip

\noindent Yulia Alexandr, UC Berkeley \hfill \href{mailto:yulia@math.berkeley.edu}{\texttt{yulia@math.berkeley.edu}}

\noindent Joe Kileel, UT Austin \hfill \href{mailto:jkileel@math.utexas.edu}{\texttt{jkileel@math.utexas.edu}}

\noindent Bernd Sturmfels, MPI-MiS Leipzig  and UC Berkeley \hfill \href{mailto:bernd@mis.mpg.de}{\texttt{bernd@mis.mpg.de}}

\end{document}